\newcommand \Ln{\mathscr{L}^n}
\newcommand \Hs{\mathscr{H}^{n-1}}
\newcommand \Om{\Omega}
\newcommand \om{\omega}
\newcommand \eps{\epsilon}
\newcommand \Uk{\mathcal{U}_k}
\newcommand \Up{\mathcal{U}_p}
\newcommand \Per{\text{Per}}
\newcommand \R{\mathbb{R}}
\newcommand \Rn{\mathbb{R}^n}
\newcommand \Span{\text{span}}
\newcommand \Tr{\text{Tr}}
\newcommand \F{\mathcal{F}}
\newcommand \B{\mathbb{B}}
\newcommand \U{\textbf{u}}
\newcommand \V{\textbf{v}}
\newcommand \W{\textbf{w}}
\newcommand \SBV{\mathrm{SBV}}
\newtheorem{main} {Theorem} 
\newtheorem{proposition}[main] {Proposition}
\newtheorem{lemma}[main]{Lemma}  
\theoremstyle{definition} 
\newtheorem{definition}[main] {Definition}
\numberwithin{equation}{section}
\begin{document}

\title[M. Nahon]{Existence and regularity of optimal shapes for spectral functionals with Robin boundary conditions}
\author[M. Nahon]{Mickaël Nahon}
\address[Mickaël Nahon]{Univ. Savoie Mont Blanc, CNRS, LAMA \\ 73000 Chamb\'ery, France}
\email{  mickael.nahon@univ-smb.fr}

\keywords{Free Discontinuity, Spectral optimization, Robin Laplacian, Robin boundary conditions}
\subjclass[2020]{ 35P15, 49Q10. }
\maketitle

\begin{abstract}
We establish the existence and find some qualitative properties of open sets that minimize functionals of the form $ F(\lambda_1(\Om;\beta),\hdots,\lambda_k(\Om;\beta))$ under measure constraint on $\Om$, where $\lambda_i(\Om;\beta)$ designates the $i$-th eigenvalue of the Laplace operator on $\Om$ with Robin boundary conditions of parameter $\beta>0$. Moreover, we show that minimizers of $\lambda_k(\Om;\beta)$ for $k\geq 2$ verify the conjecture $\lambda_k(\Om;\beta)=\lambda_{k-1}(\Om;\beta)$ in dimension three and more.
\end{abstract}

\tableofcontents

\section{Introduction}

Let $\Om$ be a bounded Lipschitz domain in $\Rn$, $\beta>0$ a parameter that is constant throughout the paper, and $f\in L^2(\Om)$. The Poisson equation with Robin boundary conditions is
\[\begin{cases}-\Delta u=f& \text{ in }\Om,\\ \partial_\nu u+\beta u=0 & \text{ in }\partial\Om,\end{cases}\]
where $\partial_\nu$ is the outward normal derivative that may only have a meaning in the sense that for all $v\in H^1(\Om)$,
\[\int_{\Om}\nabla u\cdot\nabla v\mathrm{d}\Ln+\int_{\partial\Om}\beta u v\mathrm{d}\Hs=\int_{\Om}fv\mathrm{d}\Ln.\]
This equation (and in particular its boundary conditions) has several interpretations: we may see the solution $u$ as the temperature obtained in an homogeneous solid $\Om$ with the volumetric heat source $f$, and insulator on the boundary (more precisely, a width $\beta^{-1}\epsilon$ of insulator of conductivity $\epsilon$ for $\epsilon\rightarrow 0$) that separates the solid $\Om$ from a thermostat.\\
Another interpretation is to see $u$ as the vertical displacement of a membrane with shape $\Om$ on which we apply a volumetric normal force $f$, and the membrane is fixed on its boundary by elastic with stiffness proportional to $\beta$.\\

This equation is associated to a sequence of eigenvalues
\[0<\lambda_1(\Om;\beta)\leq \lambda_2(\Om;\beta)\leq \hdots\rightarrow +\infty,\]
with eigenfunctions $u_k(\Om;\beta)$ that verify
\[\begin{cases}\Delta u_k(\Om;\beta)+\lambda_{k}(\Om;\beta)u_k(\Om;\beta)=0& \text{ in }\Om,\\ \partial_\nu u_k(\Om;\beta)+\beta u_k(\Om;\beta)=0 & \text{ in }\partial\Om.\end{cases}\]

The quantities $(\lambda_k(\Om;\beta))_k$ may be extended to any open set $\Om$ in a natural way, see Section 2 for more details.\bigbreak

In this paper, we study some shape optimization problems involving the eigenvalues $(\lambda_k(\Om;\beta))_k$ with measure constraint on general open sets. In particular we prove that when $F(\lambda_1,\hdots,\lambda_k)$ is a function with positive partial derivative in each $\lambda_i$ (such as $F(\lambda_1,\hdots,\lambda_k)=\lambda_1+\hdots+\lambda_k$), then for any $m,\beta>0$ the optimisation problem
\[\min\left\{F\left(\lambda_1(\Om;\beta),\hdots,\lambda_k(\Om;\beta)\right),\ \Omega\subset\Rn\text{ open such that }|\Om|=m\right\}\]
has a solution. Moreover the topological boundary of an optimal set is rectifiable, Ahlfors-regular, with finite $\Hs$-measure. 
For functionals of the form $F(\lambda_1,\hdots,\lambda_k)=\lambda_k$, while minimizers are only known to exist in a relaxed $\SBV$ setting (that will be detailed in the second section), we show that any $\SBV$ minimizer verifies
\[\lambda_k(\Om;\beta)=\lambda_{k-1}(\Om;\beta)\]
in any dimension $n\geq 3$.
\subsection{State of the art}

The link between the eigenvalues of the Laplace operator (or other differential operators) on a domain and the geometry of this domain is a problem that has been widely studied, in particular in the field of spectral geometry.\\

The earliest and most well-known result in this direction dates back to the Faber-Krahn inequality, that states that the first eigenvalue of the Laplacian with Dirichlet boundary conditions is, among sets of given measure, minimal on the disk. The same result was shown for Robin boundary conditions with positive parameter in \cite{B88} in the two-dimensional case, then in \cite{D06} in any dimension for a certain class of domains on which the trace may be defined, using dearrangement methods. It was extended in \cite{BG10}, \cite{BG15} in the $\SBV$ framework that we will describe in the next section, such that the first eigenvalue with Robin boundary condition is minimal on the ball among all open sets of given measure. In order to handle the lack of uniform smoothness of the admissible domains, the method here is to consider a relaxed version of the problem, so as to optimize an eigenfunction instead of a shape. Once it is known a minimizer exists in the relaxed framework, it is shown by regularity and symmetry arguments that this minimizer corresponds to the disk.\\

Similar problems of spectral optimization with Neumann boundary conditions or Robin conditions with negative parameter have been shown to be different in nature, in the former case the first eigenvalue is maximal on the disk, and this is shown with radically different method, mainly building appropriate test functions since the eigenvalues are defined as an infimum through the Courant-Fischer min-max formula. Let us also mention several maximization result for Robin boundary condition with parameter that scales with the perimeter, obtained in \cite{L19}, \cite{GL19} with similar methods.\\

The existence and partial regularity for minimizers of functions $F(\lambda_1^D(\Om),\hdots,\lambda_k^D(\Om))$ (where $\lambda_i^D(\Om)$ is the $i$-th eigenvalue of the Laplacien with Dirichlet boundary conditions) with measure constraint or penalization has been achieved in \cite{B12}, \cite{MP13}, \cite{KL18}, \cite{KL19}: it is known that if $F$ is increasing and bi-Lipschitz in each $\lambda_i$ then there is an optimal open set that is $\mathcal{C}^{1,\alpha}$ outside of a singular set of codimension at least three, and if $F$ is merely nondecreasing in each coordinate then there is an optimal quasiopen set that has analytic boundary outside of a singular set of codimension three and points with Lebesgue density one. It has been shown in \cite{BMPV15}, \cite{KL19} that a shape optimizer for the $k$-th eigenvalue with Dirichlet boundary conditions and measure constraint admits Lipschitz eigenfunctions. In these papers the monotonicity and scaling properties of the eigenvalues with Dirichlet boundary condition ($\om\mapsto \lambda_k^D(\om)$ is decreasing in $\om$) plays a crucial role, however eigenvalues with Robin boundary conditions have no such properties so the same methods cannot be extended in a straightforward way.\\

The minimization of $\lambda_2(\Om;\beta)$ under measure constraint on $\Om$ was treated in \cite{K09}; as in the Dirichlet case, the minimizer is the disjoint union of two balls of same measure. For the minimization of $\lambda_k(\Om;\beta)$ or other functionals of $\lambda_1(\Om;\beta),\hdots,\lambda_k(\Om;\beta)$, nothing is known except for the existence of a minimizer in the relaxed setting with bounded support for $\lambda_k(\Om;\beta)$, see \cite{BG19}. The regularity theory for minimizers of functionals involving Robin boundary conditions was developped in \cite{CK16}, \cite{K19} and we will relay on some of its results in our vectorial setting.\\

Numerical simulations in \cite{AFK13} for two-dimensional minimizers of $\lambda_k(\cdot;\beta)$ (for $3\leq k\leq 7$) with prescribed area suggest a bifurcation phenomena in which the optimal shape is a union of $k$ balls for every small enough $\beta$, and it is connected for any large enough $\beta$. In \cite{AFK13}, the connected minimizers were searched by parametric optimization among perturbations of the disk, however a consequence of our analysis in the last section is that minimizers of $\lambda_3(\cdot;\beta)$ are never homeomorphic to the disk.

\subsection{Statements of the main results}

In the first part of the paper, we are concerned in what we call the non-degenerate case; consider
\[F:\left\{\lambda\in\R^k:0<\lambda_1\leq \lambda_2\leq\hdots\leq \lambda_k\right\}\to\R_+\]
a Lipschitz function with directional derivatives - in the sense that for any $\lambda\in\R^k$ there is some positively homogeneous function $F_0$ such that $F(\lambda+\nu)=F(\lambda)+F_0(\nu)+o_{\nu\to 0}(|\nu|)$ - such that for all $i\in\left\{1,\hdots,n\right\}$,  and all $0<\lambda_1\leq \hdots\leq\lambda_k$
\begin{equation}\label{HypF}
\frac{\partial F}{\partial^\pm \lambda_i}(\lambda_1,\hdots,\lambda_k)>0,\ F(\lambda_1,\hdots,\lambda_{k-1},\mu_k)\underset{\mu_k\to\infty}{\longrightarrow}+\infty,
\end{equation}
where $\frac{\partial}{\partial^{\pm} \lambda_i}$ designates the directional partial derivatives in $\lambda_i$. This applies in particular to any of these:
\[F_p(\lambda_1,\hdots,\lambda_k)=\left(\sum_{i=1}^k \lambda_i^p\right)^\frac{1}{p}.\]
Our first main result is the following.

\begin{main}\label{main1}
Let $F$ be such a function, $m>0$, then there exists an open set that minimizes the functional
\[\Om\mapsto F(\lambda_1(\Om;\beta),\hdots,\lambda_k(\Om;\beta))\]
among open sets of measure $m$ in $\Rn$. Moreover any minimizing set is bounded, verifies $\Hs(\partial\Om)\leq C$ for some constant $C>0$ depending only on $(n,m,\beta,F)$, and $\partial\Om$ is Ahlfors-regular.
\end{main}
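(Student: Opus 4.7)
The plan is to extend the Bucur--Giacomini relaxation strategy \cite{BG15,BG19} from a single eigenvalue to the vectorial setting of $k$ coupled eigenvalues. First I would embed the problem into a vectorial $\SBV$ framework: to a compactly supported vector $U = (u_1, \ldots, u_k) \in \SBV(\R^n)^k$ with $L^2$-orthonormal components, associate the Robin energy
$$\mathcal{E}_\beta(U) = \int_{\R^n} |\nabla U|^2 \, \mathrm{d}\Ln + \beta \int_{J_U} \bigl(|U^+|^2 + |U^-|^2\bigr) \, \mathrm{d}\Hs$$
together with the volume constraint $|\{|U| > 0\}| \leq m$. The relaxed eigenvalues $\tilde\lambda_i(U;\beta)$ are defined via a Courant--Fischer min-max over $i$-dimensional subspaces of the span of $U$'s components (extended suitably to arbitrary test subspaces in the relaxed class), and coincide with the $\lambda_i(\Om;\beta)$ when $U$ comes from eigenfunctions of a regular $\Om$. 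The objective becomes $F(\tilde\lambda_1(U;\beta), \ldots, \tilde\lambda_k(U;\beta))$.

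Next, I would run the direct method on a minimizing sequence $(\Om_n, U_n)$. The coercivity condition in \eqref{HypF} gives a uniform bound on $\lambda_k(\Om_n;\beta)$, hence on $\mathcal{E}_\beta(U_n)$, and a Faber--Krahn-type inequality translates this into uniform bounds on $\|\nabla U_n\|_{L^2}$ and on $\beta\, \Hs(J_{U_n})$. After an $L^\infty$ truncation of each component (which does not increase $\mathcal{E}_\beta$), Ambrosio's $\SBV$ compactness theorem applied componentwise yields a limit $U^\star \in \SBV(\R^n)^k$. Lower semicontinuity of the jump term, continuity of the orthonormality constraint, lower semicontinuity of the min-max eigenvalues under weak $H^1$ / strong $L^2$ convergence, and the Lipschitz monotonicity of $F$ together show that $U^\star$ is a minimizer of the relaxed problem. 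Boundedness of the support is then obtained by a concentration argument: disjoint clusters of $\{|U^\star|>0\}$ can be translated freely without changing the energy, so if the support were unbounded one could either push clusters to infinity (contradicting attainment) or collide two distant pieces, which one shows strictly decreases $\mathcal{E}_\beta$ via the positive contribution of the removed jump trace.

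The main obstacle is the passage from the relaxed minimizer to an open set via regularity. The hypothesis $\partial F/\partial^\pm \lambda_i > 0$ translates, through a Lagrange-multiplier computation on the relaxed min-max, into strictly positive multipliers, so that each $u_i^\star$ is, up to sign, an almost-minimizer of a scalar Robin--Mumford--Shah-type functional under a volume constraint. One then invokes the regularity machinery of \cite{BG15,CK16,K19}. The technical difficulty is that those results are stated for a single $\SBV$ function, whereas the $u_i^\star$ are coupled through both orthonormality and $F$; one must derive a per-component almost-minimality inequality -- perturbing one component while freezing the others and carefully tracking the change in all $\tilde\lambda_j$ using the Lipschitz bound on $F$ -- that is strong enough to invoke the scalar results, and then sum the resulting estimates on the $J_{u_i^\star}$. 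This yields finiteness of $\Hs(J_{U^\star})$ with a bound depending only on $(n,m,\beta,F)$, Ahlfors-regularity of $\overline{J_{U^\star}}$, and density estimates on the support. Setting $\Om^\star := \{|U^\star|>0\} \setminus \overline{J_{U^\star}}$ then produces an admissible open set whose topological boundary is contained in $\overline{J_{U^\star}}$, and which therefore inherits the $\Hs$-bound and Ahlfors-regularity; a final verification that the Robin eigenvalues of $\Om^\star$ agree with the relaxed eigenvalues of $U^\star$ completes the proof that $\Om^\star$ is a minimizer among open sets.
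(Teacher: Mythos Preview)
Your outline has the right architecture (relax to $\SBV$, prove existence, then regularity), but two load-bearing steps are missing or incorrect.

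\medskip
\textbf{The non-degeneracy lower bound is absent.} You claim that the energy bound ``translates into uniform bounds on $\|\nabla U_n\|_{L^2}$ and on $\beta\,\Hs(J_{U_n})$''. This is false: the Robin energy controls $\int_{J_U}(|U^+|^2+|U^-|^2)\,\mathrm{d}\Hs$, not $\Hs(J_U)$, and no Faber--Krahn argument bridges that gap. The paper's crucial step (Proposition~\ref{apriori}) is to prove, for an interior minimizer of the penalized functional, the pointwise bound $|\U|\ge \delta\, 1_{\{\U\neq 0\}}$. This is obtained by comparing $\U$ with $\U 1_{\{|\U|>t\}}$ and running a differential inequality on $f(t)=\int_0^t \tau\,\Hs(\partial^*\{|\U|>\tau\}\setminus J_\U)\,\mathrm d\tau$; it exploits the strict positivity of $\partial F/\partial\lambda_i$ through the matrix monotonicity of $G$. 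Only \emph{after} this bound does the Robin term dominate $\delta^2\Hs(J_\U)$, which is what yields both the $\Hs(\partial\Om)\le C$ estimate and the comparison to Mumford--Shah needed for Ahlfors regularity. Your proposed ``per-component almost-minimality plus scalar regularity'' route cannot start without this, and the coupling through eigenvalue multiplicities (handled in the paper via the directional derivative formula~\eqref{Diff} for $G$) makes a clean reduction to $k$ scalar problems unlikely.

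\medskip
\textbf{The existence step does not address loss of compactness.} Ambrosio compactness on $\R^n$ is only local; a minimizing sequence can split into pieces drifting apart, and your limit $U^\star$ may then fail to be linearly independent or to saturate the volume constraint. Your suggested fix (``collide two distant pieces, which strictly decreases $\mathcal E_\beta$ via the removed jump trace'') does not work: translating disconnected pieces in the relaxed framework does not change the energy at all, and gluing them creates \emph{new} jump, not less. The paper resolves this by a concentration--compactness dichotomy with \emph{induction on $k$}: after translating to a concentration point, either the whole sequence converges, or it splits as $\U^i 1_{\B_{R^i}}$ and $\U^i 1_{\B_{R^i}^c}$ with $p$ and $k-p$ eigenvalues respectively surviving, and one applies the inductive hypothesis to two auxiliary functionals $\F_{\text{loc}}$, $\F_{\text{dist}}$ that still satisfy~\eqref{HypF}. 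The a~priori boundedness of the support (also derived from the non-degeneracy bound via a density estimate) is then used to place the two resulting minimizers at positive distance. None of this is captured by a direct-method sketch.
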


Here are the main steps of the proof:
\begin{itemize}[label=\textbullet]
\item \textbf{Relaxation}. We relax the problem in the $\SBV$ framework; this is introduced in the next subsection, following \cite{BG10}, \cite{BG15}, \cite{BG19}. The idea is that the eigenfunctions on a domain $\Om$ are expected to be zero almost nowhere on $\Om$; we extend these eigenfunctions by zero outside of $\Omega$ (thereby creating a discontinuity along $\partial\Om$) and reformulate the optimization problem on general functions defined in $\R^n$ that may have discontinuities, with measure constraint on their support. The advantage is that we have some compactness and lower semi-continuity results to obtain the existence of minimizers in the relaxed framework, however a sequence of eigenfunctions extended by zero may converge to a function that does not correspond to the eigenfunction of an open domain, so we will need to show some regularity on relaxed minimizers.
\item \textbf{A priori estimates and nondegeneracy}. We obtain a priori estimates for relaxed interior minimizers (meaning minimizers compared to any set that it contains). More precisely for any interior minimizer that corresponds to the eigenfunctions $(u_1,\hdots,u_k)$, we show that for almost any point in the support of these eigenfunctions, at least one of them is above a certain positive threshold. We also obtain $L^\infty$ bounds of these eigenfunctions and deduce a lower estimate for the Lebesgue density of the support, from which we obtain the boundedness of the support.

\item \textbf{Existence of minimizers}. We consider a minimizing sequence and show that, up to a translation, it either converges to a minimizer or it splits into two minimizing sequences of similar functionals depending on $p$ and $k-p$ (where $1\leq p<k$) eigenvalues respectively, and we know minimizers of these exists by induction on $k$.
\item \textbf{Regularity}. Finally, we show the regularity of relaxed minimizer, meaning that a relaxed minimizer corresponds to the eigenfunctions of a certain open domain that were extended by zero, by showing that the singular set of relaxed minimizers is closed up to a $\Hs$-negligible set.
\end{itemize}

Notice that in the second step we do not show that the first eigenfunction (or one of the $l$ first in the case where the minimizer has $l$ connected components) is positive, which is what we expect in general for sufficiently smooth sets; if $u_1$ is the first (positive) eigenfunction on a connected $\mathcal{C}^2$ set $\Om$, and suppose $u_1(x)=0$ for some $x\in\partial\Om$ then by Hopf's lemma $\partial_\nu u_1(x)<0$, which breaks the Robin condition at $x$, so $\inf_{\Om}u_1>0$. In our case, we get instead a "joint non-degeneracy" of the eigenfunctions in the sense that at every point of their joint support, at least one is positive.\bigbreak

Notice also that the second hypothesis in \eqref{HypF} is not superfluous: without it, a minimizing sequence $(\Om^i)$ could have some of its first $k$ eigenvalues diverge. This is because, unlike the Dirichlet case, there is no upper bound for $\frac{\lambda_k(\cdot;\beta)}{\lambda_1(\cdot;\beta)}$ in general even among sets with fixed measure. While $\lambda_k(\cdot;\beta)$ is not homogeneous by dilation, we still have the scaling property
\[\lambda_k(r\Om;\beta)=r^{-2}\lambda_k(\Om;r\beta).\]
Consider a connected smooth open set $\Om$. Since each $\lambda_k(\Om;r\beta)$ converges to $\lambda_k(\Om,0)$ (the eigenvalues with Neumann boundary conditions) as $r\to 0$, and $0=\lambda_1(\Om,0)<\lambda_2(\Om,0)$, then for any $k\geq 2$, $\frac{\lambda_k(r\Om;\beta)}{\lambda_1(r\Om;\beta)}\underset{r\to 0}{\longrightarrow}+\infty$. A counterexample among sets of fixed measure may be obtained with the disjoint union of $r\Om$ for small $r$ and a set $\om$ with prescribed measure such that $\lambda_1(\om;\beta)>\lambda_k(r\Om;\beta)$, such as a disjoint union of enough balls of radius $\rho>0$, chosen small enough to have $\lambda_1(\om,\beta)=\lambda_1(\B_\rho;\beta)>\lambda_k(r\Om;\beta)$.\bigbreak

In the second part of the paper, we study the minimizers of the functional
\[\Om\mapsto \lambda_k(\Om;\beta).\]
A minimizer in the $\SBV$ framework (see the introduction below) was shown to exist in \cite{BG19}, and aside from the fact that its support is bounded nothing more is known. We show that, in this $\SBV$ framework, a minimizer necessarily verify that $\lambda_{k-1}(\Om;\beta)=\lambda_k(\Om;\beta)$, in the context of definition \ref{def_relaxed}.\bigbreak
This is a long lasting open problem for minimizers of $\lambda_k$ with Dirichlet boundary condition (see \cite[open problem 1]{H06} and \cite{O04}).\bigbreak

However, although we prove it for Robin conditions, we do not expect this result to directly extend to the Dirichlet case ; simply put, even if some smooth sequence of minimizers $\Om^\beta$ of $\lambda_k(\cdot;\beta)$ approached a minimizer $\Om$ of $\lambda_k^D$ that is a counterexample of the conjecture, then there is no reason why the upper semi-continuity $\lambda_{k-1}^D(\Om)\geq \limsup_{\beta\to\infty}\lambda_{k-1}(\Om^\beta;\beta)$ should hold.

\begin{main}
Suppose $n\geq 3$, $k\geq 2$. Let $m>0$, and let $\U$ be a relaxed minimizer of
\[\V\mapsto \lambda_k(\V;\beta)\]
among admissible functions with support of measure $m$. Then
\[\lambda_{k-1}(\U;\beta)=\lambda_k(\U;\beta).\]
\end{main}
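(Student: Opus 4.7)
The plan is to argue by contradiction: assume $\lambda_{k-1}(\U;\beta) < \lambda_k(\U;\beta) =: \lambda$. Then $\lambda_k$ is simple, with a unique (up to sign) $L^2$-normalized eigenfunction $u_k$ orthogonal to $u_1,\dots,u_{k-1}$. First I would adapt the regularity techniques from the first part of the paper, together with the analysis of \cite{BG15,BG19}, to obtain that $u_k$ is continuous in the essential interior of the support of $\U$, that $u_k$ admits traces on the jump set $J_\U$ where the Robin eigenvalue equation holds in a weak sense, and that $J_\U$ is $\Hs$-rectifiable with a classical tangent $\Hs$-a.e.\ A joint non-degeneracy statement analogous to the one used in the first theorem should guarantee that $u_k$ does not vanish on a set of positive $\Hs$-measure of $J_\U$.

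The key device is to decompose a volume-preserving perturbation into a \emph{remote addition} and a \emph{local modification}. Given any local modification of $\U$ near a point $x_0$ that loses measure $\delta > 0$, I attach a small disjoint ball $B_\rho$ of measure $\delta$ placed far from $\text{supp}(\U)$. The scaling $\lambda_1(B_\rho;\beta) = \rho^{-2}\lambda_1(B_1;\rho\beta)$, combined with the small-parameter asymptotics $\lambda_1(B_1;\gamma) \sim n\gamma$ as $\gamma \to 0$, yields $\lambda_1(B_\rho;\beta) \sim n\beta/\rho \to +\infty$. Hence for $\delta$ small enough the full spectrum of $B_\rho$ lies strictly above $\lambda$, and the first $k$ eigenvalues of the disjoint union $\U_\delta \sqcup B_\rho$ coincide with those of the locally modified set $\U_\delta$. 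The problem thus reduces to constructing a local modification $\U_\delta$ with $|\U_\delta| = |\U| - \delta$ and $\lambda_k(\U_\delta;\beta) < \lambda_k(\U;\beta)$.

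The heart of the argument is the design of this local modification. Choose $x_0$ on $J_\U$ where $u_k(x_0) \neq 0$ and $J_\U$ is flat, and remove from $\U$ a small set $S_\delta$ of measure $\delta$ with geometry designed so that, at first order, a test function orthogonal to suitably corrected versions of $u_1,\dots,u_{k-1}$ has Rayleigh quotient strictly below $\lambda$. In dimension $n \geq 3$, one can arrange the capacity of $S_\delta$ to be $o(1)$ as $\delta \to 0$, so standard capacity estimates bound the shift of each of the first $k-1$ eigenvalues by $o(1)$; this preserves the strict gap $\lambda_{k-1}(\U_\delta;\beta) < \lambda_k(\U_\delta;\beta)$ for small $\delta$, so that a Courant-Fisher min-max argument on the corresponding subspace yields the desired strict decrease $\lambda_k(\U_\delta;\beta) < \lambda$. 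Together with the remote addition, this produces a valid competitor and contradicts the minimality of $\U$. The hard part is the quantitative design of $S_\delta$: dimension $n \geq 3$ is essential because the capacity of small sets is only logarithmic in $n = 2$, preventing simultaneous control of $\lambda_{k-1}$ and a definite decrease of $\lambda_k$. This is also why the argument does not apply to the Dirichlet case: Dirichlet eigenvalues are monotone nonincreasing in the domain, so any local volume loss raises all Dirichlet eigenvalues, foreclosing the trade-off between a small increase in $\lambda_{k-1}$ and a larger decrease in $\lambda_k$; for Robin, the SBV framework provides additional flexibility---a boundary reshaping at zero measure cost---that bypasses this monotonicity.
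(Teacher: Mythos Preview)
Your proposal has a genuine gap at the decisive step: you never explain \emph{why} removing $S_\delta$ near a boundary point with $u_k(x_0)\neq 0$ would decrease $\lambda_k$. Truncating $u_k$ to $\Rn\setminus S_\delta$ adds a Robin term $\beta\int_{\partial^* S_\delta\setminus J_\U}u_k^2\,\mathrm{d}\Hs$ to the numerator of its Rayleigh quotient; near a point where $|u_k|\sim |u_k(x_0)|>0$ this is of order $\Per(S_\delta)$ and dominates both the $O(|S_\delta|)$ loss in the denominator and the removed gradient energy, so the quotient goes \emph{up}, not down. ``Capacity'' is not the right currency here---for Robin perturbations the cost of a cut scales with perimeter, not capacity---and in any case you invoke capacity only to control $\lambda_1,\dots,\lambda_{k-1}$, leaving the claimed strict decrease of $\lambda_k$ entirely unjustified. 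Heuristically, for a minimizer with simple $\lambda_k$ the shape gradient is constant along the free boundary (Lagrange multiplier for the volume constraint), so volume-preserving local boundary perturbations are stationary at first order; one cannot manufacture a contradiction this way without a further structural ingredient. Your preliminary regularity step (continuity of $u_k$, traces, flat boundary points) is also not available in the relaxed $\SBV$ setting and is not established in the paper.

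The paper supplies the missing ingredient via a completely different mechanism. A perturbation lemma (comparing $\U$ with $\U 1_{\Rn\setminus\omega}$ for $\omega$ of small measure and small $\Per(\omega;\Rn\setminus J_\U)$) is applied to $\omega=\B_{x,r(t)}\cap\{|u_k|\leq t\}$ with $r(t)=\epsilon t^{2/(n-1)}$; a differential inequality in $t$ then yields the \emph{non-degeneracy} $|u_k|\geq c\,1_{\{u_k\neq 0\}}$. The hypothesis $n\geq 3$ enters exactly here, through the factor $t^{-(n-3)/(n-1)}$ appearing in $f'(t)$, which must stay bounded as $t\to 0$---this is unrelated to capacity. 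Non-degeneracy forces $\U$ to split as $(\U 1_{\{u_k>0\}})\oplus(\U 1_{\{u_k<0\}})$, with both pieces of positive measure because $v_1\geq 0$ and $\langle v_1,v_k\rangle=0$ (after the regularization of Proposition~\ref{LemmaApriori}). The contradiction then comes from \emph{dilation}: enlarging the component carrying the $k$-th eigenfunction and shrinking the other strictly lowers $\lambda_k$ at fixed total volume, via the scaling $\lambda_k(r\Om;\beta)=r^{-2}\lambda_k(\Om;r\beta)$. Your remote-ball device correctly absorbs volume changes, but it is this disconnection-plus-rescaling step---not a local boundary modification---that actually produces the strict decrease of $\lambda_k$.
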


Here are the main steps and ideas of the proof:
\begin{itemize}[label=\textbullet]
\item First, we replace the minimizer $\U=(u_1,\hdots,u_k)$ with another minimizer $\V=(v_1,\hdots,v_k)$, with the property that $v_1\geq 0$, $\lambda_{k}(\V;\beta)>\lambda_{k-1}(\V;\beta)$, and $\V\in L^\infty(\R^n)$. One might think this estimate also holds for $\U$, however there is no particular reason why $\Span(\U)$ should contain eigenfunctions for $\lambda_1(\U;\beta),\hdots,\lambda_{k-1}(\U;\beta)$ in a variational sense.\\
This phenomenon may be easily understood in a finite-dimensional setting as follows: consider the matrix
$A=\begin{pmatrix}\lambda_1 \\ & \lambda_2 \\ & & \lambda_3\end{pmatrix}$ with $\lambda_1<\lambda_2<\lambda_3$. Then $\Lambda_2\Big[A\Big]$ is given by:
\[\lambda_2=\inf_{V\subset \R^3,\text{dim}(V)=2}\sup_{x\in V}\frac{(x,Ax)}{(x,x)}.\]
This infimum is reached by the subspace $\Span(e_1,e_2)$, but also by any subspace $\Span(e_1+te_3,e_2)$ for $|t|\leq \frac{\lambda_2-\lambda_1}{\lambda_3-\lambda_2}$, and these subspaces do not contain the first eigenvector $e_1$.
\item Then we obtain a weak optimality condition on $u_k$ using perturbations on sets with a small enough perimeter. The reason for this is that we have no access to any information on $u_1,\hdots,u_{k-1}$ apart from the fact that their Rayleigh quotient is strictly less than $\lambda_k(\Om;\beta)$, so we must do perturbations of $u_k$ that do not increase dramatically the Rayleigh quotient of $u_1,\hdots,u_{k-1}$.
\item We apply this to sets of the form $\B_{x,r}\cap\left\{|u_k|\leq t\right\}$ where $r$ is chosen small enough for each $t$. With this we obtain that $|u_k|\geq c 1_{\left\{ u_k\neq 0\right\}}$.
\item We deduce the result by showing that the support of $\U$ is disconnected, so the $k$-th eigenvalue may be decreased without changing the volume by dilations.
\end{itemize}

While the existence of open minimizers is not yet known, we end with a few observations on the topology of these minimizer, in particular with the fact that a bidimensionnal minimizer of $\lambda_3(\cdot;\beta)$ with prescribed measure is never simply connected.

\section{Relaxed framework}

Throughout the paper, we use the relaxed framework of $\SBV$ functions to define Robin eigenvalues on any open set without regularity condition, and more importantly to transform our shape optimization problem into a free discontinuity problem on functions that are not defined on a particular domain any more. The $\SBV$ space was originally developed to handle relaxations of free discontinuity problems such as the Mumford-Shah functional that will come into play later, we refer to \cite{AFP00} for a complete introduction. $\SBV$ functions may be thought of as "$W^{1,1}$ by part" functions, and this space is defined as a particular subspace of $BV$ as follows: 
\begin{definition}
A $\SBV$ function is a function $u\in BV(\Rn,\R)$ such that the distributional derivative $Du$ (which is a finite vector-valued Radon measure) may be decomposed into
\[Du=\nabla u \Ln +(\overline{u}-\underline{u})\nu_u \Hs_{\lfloor J_u}, \]
where $\nabla u\in L^1(\Rn)$, $J_u$ is the jump set of $u$ defined as the set of point $x\in\Rn$ for which there is some $\overline{u}(x)\neq \underline{u}(x)\in\R$, $\nu_u(x)\in\mathbb{S}^{n-1}$, such that
\[\left(y\mapsto u(x+ry)\right)\underset{L^1_{\text{loc}}(\Rn)}{\longrightarrow}\overline{u}(x)1_{\{y:y\cdot\nu_u(x)>0\}}+\underline{u}(x)1_{\{y:y\cdot\nu_u(x)<0\}}\text{ as }r\to 0.\]
\end{definition}

We will not work directly with the $\SBV$ space but with an $L^2$ analog defined below, that was studied in \cite{BG10}.

\begin{definition}
Let $\Uk$ be the space of functions $\U\in L^2(\R^n,\R^k)$ such that
\[D\U=\nabla \U \Ln +(\overline{\U}-\underline{\U})\nu_\U \Hs_{\lfloor J_\U}, \]
where $\nabla \U\in L^2(\R^n,\R^{nk})$ and $\int_{J_\U}(|\overline{\U}|^2+|\underline{\U}|^2)\mathrm{d}\Hs<\infty$. The second term will be written $D^s\U$ ($s$ stands for singular). The function $\U$ is said to be linearly independant if its components span a $k$-dimensional space of $L^2(\R^n)$.\\
We will also say that a function $\U\in \Uk$ is disconnected if there is a measurable partition $\Om,\om$ of the support of $\U$ such that $\U1_\Om$ and $\U1_\om$ are in $\Uk$, and: \begin{align*}
D^s(\U1_\Om)&=(\overline{\U1_\Om}-\underline{\U1_\Om})\nu_\U \Hs_{\lfloor J_\U},\\
D^s(\U1_\om)&=(\overline{\U1_\om}-\underline{\U1_\om})\nu_\U \Hs_{\lfloor J_\U}.\end{align*}
In this case we will write $\U=(\U1_\Om)\oplus(\U1_\om)$.
\end{definition}

The following compactness theorem is a reformulation of Theorem 2 from \cite{BG10}.
\begin{proposition}\label{CompactnessLemma}
Let $(\U^i)$ be a sequence of $\Uk$ such that
\[\sup_{i}\int_{\R^n}|\nabla\U^i|^2\mathrm{d}\Ln+\int_{J_\U}(|\underline{\U^i}|^2+|\overline{\U^i}|^2)\mathrm{d}\Hs+\int_{\Rn}|\U^i|^2\mathrm{d}\Ln<\infty,\]
then there exists a subsequence $(\U^{\phi(i)})$ and a function $\U\in\Uk$ such that
\begin{align*}
\U^{\phi(i)}&\underset{L^2_\text{loc}}{\longrightarrow}\U,\\
\nabla\U^{\phi(i)}&\underset{L^2_{\text{loc}}-\text{weak}}{\rightharpoonup}\nabla\U,\\
\end{align*}
Moreover for any bounded open set $A\subset\R^n$
\begin{align*}
\int_{A}|\nabla \U|^2\mathrm{d}\Ln&\leq \liminf_{i\to+\infty}\int_{A}|\nabla \U_\phi(i)|^2\mathrm{d}\Ln,\\
\int_{J_u\cap A}(|\overline{\U}|^2+|\underline{\U}|^2)\mathrm{d}\Hs&\leq \liminf_{i\to+\infty}\int_{J_u\cap A}(|\overline{\U^{\phi(i)}}|^2+|\underline{\U^{\phi(i)}}|^2)\mathrm{d}\Hs.\\
\end{align*}
\end{proposition}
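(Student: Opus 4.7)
This is a vectorial, $L^2$-trace variant of Ambrosio's classical $\SBV$ compactness theorem. The two new features, compared with the classical setting, are that the traces are controlled only in $L^2$ rather than in $L^\infty$, and that no a priori bound is given on $\Hs(S_{\U^i})$. My strategy is to truncate the $\U^i$'s in two parameters so as to fall within the hypotheses of the classical theorem on each ball, and then extract diagonally, as in the scalar case treated in \cite{BG10}.

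Concretely, for $0<t<M$, set $\tau_{t,M}(s):=\mathrm{sign}(s)\min\bigl(M-t,(|s|-t)_+\bigr)$ and apply it componentwise to $\U^i$. One checks that $\|\tau_{t,M}(\U^i)\|_{L^\infty}\leq M$, $|\nabla\tau_{t,M}(\U^i)|\leq|\nabla\U^i|$ pointwise a.e., and $S_{\tau_{t,M}(\U^i)}\subset\{x\in S_{\U^i}:\max(|\overline{\U^i}|,|\underline{\U^i}|)>t\}$, whose $\Hs$-measure is at most $Ct^{-2}$ by Chebyshev's inequality applied to the hypothesis. Hence on each ball $\B_R$, $(\tau_{t,M}(\U^i))_i$ satisfies the hypotheses of Ambrosio's $\SBV$ compactness theorem, and one extracts a subsequence along which the truncations converge strongly in $L^1(\B_R)$, their gradients weakly in $L^2(\B_R)$, and their singular parts weakly-$*$ as measures.

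A diagonal extraction across $R_j\to\infty$, $t_j\to 0$, $M_j\to\infty$ then yields a single subsequence $\U^{\phi(i)}$ for which all these convergences hold simultaneously, and defines $\U$ as the $\Ln$-a.e. limit (using $\tau_{t_j,M_j}(s)\to s$ pointwise together with a.e. convergence of each truncation). Weak $L^2$ compactness plus this a.e. convergence upgrade to strong $L^2_{\text{loc}}$ convergence, the uniform $L^2$ tail estimate provided by the hypothesis ruling out any loss of mass. The weak $L^2_{\text{loc}}$ convergence of $\nabla\U^{\phi(i)}$ and the weak-$*$ convergence of $D^s\U^{\phi(i)}$ are then obtained by passing to the limit in $(t,M)$ in the corresponding convergences for the truncations, absorbing the discarded pieces via the same $L^2$ tail control.

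The main obstacle is verifying that the limit $\U$ actually belongs to $\Uk$, i.e.\ that the weak-$*$ limit of $D^s\U^{\phi(i)}$ has the required structure $(\overline{\U}-\underline{\U})\nu_\U\Hs_{\lfloor S_\U}$ with finite $L^2$-trace, rather than charging a set of Hausdorff dimension strictly larger than $n-1$. This is handled by noting that $S_\U\subset\bigcup_j S_{\tau_{t_j,M_j}(\U)}$ up to an $\Hs$-negligible set, hence $S_\U$ is countably $(n-1)$-rectifiable; the two lower-semicontinuity inequalities, and the $L^2$-trace bound built into the definition of $\Uk$, then follow from the classical $\SBV$ lower-semicontinuity results applied to the truncated sequences, combined with a monotone limit in $(t,M)$ via Fatou's lemma.
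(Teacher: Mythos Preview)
Your proposal is correct and takes exactly the approach indicated by the paper, whose proof consists solely of the remark that this is an adaptation of \cite[theorem 2]{BG10} to the multidimensional case; your componentwise truncation and diagonal extraction are precisely that adaptation. One small point to tighten: the upgrade from a.e.\ plus weak $L^2$ convergence to strong $L^2_{\text{loc}}$ convergence needs the uniform higher integrability of $|\U^i|^2$ (obtained from $BV\hookrightarrow L^{n/(n-1)}$ applied to $|\U^i|^2$, whose total variation is controlled by the hypothesis), not merely the uniform $L^2$ bound you invoke.
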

\begin{proof}
The proof is an adaptation of \cite[theorem 2]{BG10} to a multidimensional case.
\end{proof}
We define a notion of $i$-th eigenvalue of the Laplace operator with Robin boundary conditions that allows us to speak of the functional $\lambda_k(\cdot;\beta)$ with no pre-defined domain, and to define the $k$-th eigenvalue on any open set even when the trace of $H^1$ functions is not well-defined.

\begin{definition}\label{def_relaxed}
Let $\U\in\Uk$ be linearly independant. We define the two Gram matrices:
\begin{align*}\label{defmatrix}
A(\U)&=\left(\langle u_i,u_j\rangle_{L^2(\mathbb{R}^n,\Ln)}\right)_{1\leq i,j\leq k},\\
B(\U)&=\left(\langle \nabla u_i,\nabla u_j\rangle_{L^2(\mathbb{R}^n,\Ln)}+\beta\langle \overline{u_i},\overline{u_j}\rangle_{L^2(J_\U,\Hs)}+\beta\langle \underline{u_i},\underline{u_j}\rangle_{L^2(J_\U,\Hs)}\right)_{1\leq i,j\leq k}.
\end{align*}
We then define the $i$-th eigenvalue of the vector-valued function $\U$ as

\begin{equation}
\lambda_i(\U;\beta)=\inf_{V\subset\Span(\U), \dim(V)=i}\sup_{v\in V}\frac{\int_{\mathbb{R}^n}|\nabla v|^2\mathrm{d}\Ln+\beta \int_{J_\U}(\overline{v}^2+\underline{v}^2)\mathrm{d}\Hs}{\int_{\mathbb{R}^n}v^2\mathrm{d}\Ln}=\Lambda_i\Big[A(\U)^{-\frac{1}{2}}B(\U)A(\U)^{-\frac{1}{2}}\Big],
\end{equation}
where $\Lambda_i$ designates the $i$-th eigenvalue of a symmetric matrix.\\
We will say that $\U$ is normalized if $A(\U)=I_k$ and $B(\U)$ is the diagonal $(\lambda_1(\U;\beta),\hdots,\lambda_k(\U;\beta))$. Following the spectral theorem, for any linearly independant $\U\in\Uk$ there exists $P\in \text{GL}_k(\R)$ such that $P\U$ is normalized.\\
Although we expect the optimal sets to have rectifiable boundary, we may define the eigenvalues with Robin boundary conditions for any open set $\Om\subset \Rn$ as
\begin{equation}\label{defopen}
\lambda_k(\Om;\beta):=\inf\Big[\lambda_k(\U;\beta), \U\in \Uk\text{ linearly independant}:\Hs(J_\U\setminus \partial\Om)=\Ln(\left\{\U\neq 0\right\}\setminus \Om)=0\Big].
\end{equation}
\end{definition}

It may be checked that for any bounded Lipschitz domain, the admissible space corresponds to linearly independant functions $\U\in H^1(\Om)^k$ so this definition is coherent with the usual.

\section{Strictly monotonous functionals}

Let us first restate the first main result in the $\SBV$ framework. We define the admissible set of functions as
\[\Uk(m)=\left\{ \V\in\Uk:\ \V\text{ is linearly independant and }|\{\V\neq 0\}|=m\right\}.\]

For any linearly independant $\U\in\Uk$, we let:
\begin{align*}
\F(\U)&:=F(\lambda_1(\U;\beta),\hdots,\lambda_k(\U;\beta)),\\
\F_\gamma(\U)&:=\F(\U)+\gamma|\left\{\U\neq 0\right\}|.
\end{align*}
Our goal is now to show that $\F$ has a minimizer in $\Uk(m)$, and that any minimizer of $\F$ in $\Uk(m)$ is deduced from an open set, meaning there is an open set $\Om$ that essentially contains $\left\{\U\neq 0\right\}$ such that $\U_{|\Om}\in H^1(\Om)^k$. This is not the case for every $\SBV$ functions: some may have a dense and non-closed jump set, while $\partial\Om$ is closed and not dense.\\
The lemma \ref{PenalizationLemma} will make a link between minimizers of $\F$ in $\Uk(m)$ and minimizers of $\F_\gamma$ among linearly independant $\U\in\Uk$ for which the support's measure is less than $m$.

\subsection{A priori estimates}

An internal relaxed minimizer of $\F_\gamma$ is a linearly independant function $\U\in\Uk$ such that for any linearly independant $\V\in \Uk$ verifying $|\left\{\V\neq 0\right\}\setminus\left\{\U\neq 0\right\}|=0$:
\[\F_\gamma(\U)\leq \F_\gamma(\V).\]

To shorten some notations, we introduce the function $G:S_k^{++}(\R)\to \R$ such that
\[\F_\gamma(\U)=G\Big[A(\U)^{-\frac{1}{2}}B(\U)A(\U)^{-\frac{1}{2}}\Big]+\gamma|\left\{\U\neq 0\right\}|,\]
meaning that for any positive definite symmetric matrix $S$, $G\Big[S\Big]=F\left(\Lambda_1\Big[S\Big],\hdots,\Lambda_k\Big[S\Big]\right)$.
The smoothness of $F$ does not imply the smoothness of $G$ in general, because of the multiplicities of eigenvalues. However the monotonicity of $F$ implies the monotonicity of $G$ in the following sense: suppose $M,N$ are positive symmetric matrices, then:
\begin{align*}
G\Big[M+N\Big]&\leq G\Big[M\Big]+\left(\max_{i=1,\hdots,k}\sup_{\Lambda_j\Big[M\Big]\leq\lambda_j\leq \lambda_j(M+N)}\frac{\partial F}{\partial^\pm\lambda_i}(\lambda_1,\hdots,\lambda_k)\right) \Tr\Big[N\Big],\\
G\Big[M+N\Big]&\geq G\Big[M\Big]+\left(\min_{i=1,\hdots,k}\inf_{\lambda_j\Big[M\Big]\leq\lambda_j\leq \lambda_j\Big[M+N\Big]}\frac{\partial F}{\partial^\pm\lambda_i}(\lambda_1,\hdots,\lambda_k)\right) \Tr\Big[N\Big].\end{align*}
Above $\frac{\partial F}{\partial^\pm\lambda_i}$ designates the directional partial derivatives of $F$.
Moreover, $G$ has directional derivative everywhere; let $M=\begin{pmatrix}\lambda_1 \\ & \ddots \\ & & \lambda_k\end{pmatrix}$ be a diagonal matrix with $p$ distincts eigenvalues and $1\leq i_1<i_2<i_p\leq k$ be such that for any $i\in I_l:=[i_l,i_{l+1})$:
\[\lambda_{i_l}=\lambda_i<\lambda_{i_{l+1}}.\]
Then for each $i\in I_l$ the function $N\mapsto \Lambda_i\Big[N\Big]$ admits the following directional derivative at $M$:
\[\Lambda_i\Big[M+N\Big]=\Lambda_i\Big[M\Big]+\Lambda_{i-i_l+1}\Big[N_{|I_l}\Big]+\underset{N\to 0}{o}(N),\]
where $N_{|I}:=(N_{i,j})_{i,j\in I}$. Since $F$ has a directional derivative everywhere, this means that $G$ admits a directional derivative

\begin{equation}\label{Diff}
G\Big[M+N\Big]=G\Big[M\Big]+F_0\left(\Lambda_1\Big[ N_{|I_1}\Big],\hdots,\Lambda_{k-i_{k}+1}\Big[ N_{|I_p}\Big]\right)+\underset{N\to 0}{o}\left(N\right),
\end{equation}

where $F_0$ is a positiverly homogeneous function that is the directional derivative of $F$ at $(\lambda_1,\hdots,\lambda_k)$.

\begin{proposition}\label{apriori}
Let $\U$ be a relaxed internal minimizer of $\F_\gamma$, suppose it is normalized. Then there exists constants $M,\delta,R>0$ that only depend on $(n,k,\beta,\F_\gamma(\U),F)$ such that
\[\delta 1_{\left\{ \U\neq 0\right\}}\leq |\U|\leq M.\]
Moreover, up to translation of its connected component, $\U$ is supported in a set of diameter bounded by $R$.
\end{proposition}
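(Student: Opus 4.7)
All three estimates will follow from internal minimality by testing $\U$ against explicit $\SBV$ competitors and exploiting the directional derivative formula \eqref{Diff} together with the strict monotonicity of $F$ in \eqref{HypF}. Throughout, the normalization gives $A(\U)=I_k$ and $B(\U)=\mathrm{diag}(\lambda_1(\U;\beta),\ldots,\lambda_k(\U;\beta))$; the coercivity in \eqref{HypF} combined with the bound on $\F_\gamma(\U)$ produces a priori upper bounds on $\lambda_k(\U;\beta)$, hence on $\sum_i\int|\nabla u_i|^2\,\mathrm{d}\Ln$ and on $\beta\int_{J_\U}(|\overline{\U}|^2+|\underline{\U}|^2)\,\mathrm{d}\Hs$.

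For the upper bound $|\U|\leq M$, I would use componentwise truncation competitors together with De Giorgi / Moser iteration. Comparing $\U$ with $v_i:=-M\vee u_i\wedge M$ preserves the support and the jump set of $\U$ (the latter only shrinks), and gives $A(\V)\preceq I_k$, $B(\V)\preceq B(\U)$; plugging this into \eqref{Diff} and using the strict monotonicity of $F$ yields a Caccioppoli-type inequality for $|u_i|$ on the super-level sets $\{|u_i|>M\}$. Since Chebyshev combined with $\int u_i^2=1$ gives $|\{|u_i|>M\}|\leq M^{-2}$, the standard iteration closes and returns $\|u_i\|_\infty\leq C(n,k,\beta,\F_\gamma(\U),F)$.

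The joint nondegeneracy $\delta 1_{\{\U\neq 0\}}\leq|\U|$ is the main obstacle. For small $\delta>0$, I test against $\V_\delta:=\U\,1_{\{|\U|>\delta\}}$: the support shrinks by $|\{0<|\U|\leq\delta\}|$, saving $\gamma|\{0<|\U|\leq\delta\}|$ in the penalty, while $I_k-A(\V_\delta)$ has trace at most $\delta^2|\{0<|\U|\leq\delta\}|$, the gradient and old-jump parts of $B$ only decrease, and new jumps on $\partial^*\{|\U|>\delta\}$ (where $|\U|\leq\delta$) contribute at most $\beta\delta^2\Hs(\partial^*\{|\U|>\delta\})$ to $B$. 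Combining with \eqref{Diff} and internal minimality gives
\[\gamma|\{0<|\U|\leq\delta\}|\leq C\delta^2\bigl(|\{0<|\U|\leq\delta\}|+\Hs(\partial^*\{|\U|>\delta\})\bigr).\]
To handle the perimeter term I would apply the $\SBV$ coarea formula to the scalar function $|\U|$ and average over $t\in[\delta/2,\delta]$ to select a level at which $\Hs(\partial^*\{|\U|>t\})$ is bounded by $\delta^{-1}$ times the total variation of $|\U|$ on $\{|\U|\leq\delta\}$; the gradient contribution is then controlled by Cauchy--Schwarz against the a priori $L^2$ bound on $\nabla\U$, while the $\Hs$-mass of $J_\U\cap\{|\U|\leq\delta\}$ is controlled by the a priori bound on $\beta\int(|\overline{\U}|^2+|\underline{\U}|^2)\mathrm{d}\Hs$ using that both traces are at most $\delta$ there. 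The resulting inequality forces $|\{0<|\U|\leq\delta\}|=0$ for $\delta$ below an explicit threshold; the delicate point is carrying out the averaging step compatibly with the merely directionally-differentiable structure of $F$.

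Once $\delta\leq|\U|\leq M$ on the support, bounded diameter (up to translating connected components) follows from a Lebesgue density estimate. For $x\in\overline{\{\U\neq 0\}}$ and small $r$, comparing $\U$ with $\U\,1_{\Rn\setminus B(x,r)}$ creates new Robin boundary energy at most $C\beta M^2 r^{n-1}$ on $\partial B(x,r)$ while saving $\gamma|B(x,r)\cap\{\U\neq 0\}|$; the lower bound $|\U|\geq\delta$ ensures the $L^2$ drop cannot dominate, and internal minimality yields $|B(x,r)\cap\{\U\neq 0\}|\gtrsim r^n$ for $r\leq r_0$. Finite total measure combined with a standard covering-chain argument then bounds the diameter of each connected component of $\{\U\neq 0\}$ by a constant $R=R(n,k,\beta,\F_\gamma(\U),F)$.
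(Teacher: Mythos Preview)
Your competitors are the right ones, but the nondegeneracy argument has a real gap. When you test against $\V_\delta=\U\,1_{\{|\U|>\delta\}}$ you note that ``the gradient and old-jump parts of $B$ only decrease'' and then discard that decrease; the paper keeps it. Writing $B_t$ for the gradient$+$jump energy matrix on $\{0<|\U|\le t\}$ and $\beta_t$ for the new boundary matrix on $\partial^*\{|\U|>t\}\setminus J_\U$, minimality actually yields
\[
\Tr\big[B_t\big]+\bigl|\{0<|\U|\le t\}\bigr|\ \le\ C\,\Tr\big[\beta_t\big],
\]
and the term $\Tr[B_t]$ on the left is indispensable. From your weaker inequality $\gamma\,|\{0<|\U|\le\delta\}|\le C\delta^2\Hs(\partial^*\{|\U|>\delta\})$, coarea averaging and Cauchy--Schwarz against the \emph{global} $L^2$ bound on $\nabla\U$ give at best a recursion $g(\delta/2)\le C\delta\,g(\delta)^{1/2}$ for $g(\delta)=|\{0<|\U|\le\delta\}|$; but $g(\delta)=c\delta^2$ satisfies this for every small enough $c$, so nothing forces $g\equiv 0$ below a threshold. (Your control of the jump part of the perimeter via ``both traces at most $\delta$'' also goes the wrong way: that bounds the energy from above by $\delta^2\Hs$, not $\Hs$ from above.) The paper instead sets $f(t)=\int_0^t\tau\,\Hs(\partial^*\{|\U|>\tau\}\setminus J_\U)\,d\tau$, applies the $BV\hookrightarrow L^{n/(n-1)}$ embedding to $|\U|^2 1_{\{0<|\U|\le t\}}$, and uses that \emph{both} $\int_{\{0<|\U|\le t\}}|\nabla\U|^2$ and $|\{0<|\U|\le t\}|$ are bounded by $Ctf'(t)$ (this is exactly where $\Tr[B_t]$ on the left is used) to reach $f(t)\le C\bigl(tf'(t)\bigr)^{1+1/(2n)}$; the exponent strictly larger than $1$ is what forces $f$, and hence $g$, to vanish on an interval $(0,\delta]$.

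Two smaller points. For the $L^\infty$ bound the paper does not use truncation competitors: it first shows each $u_i$ satisfies a variational eigenfunction identity (by comparing $\U$ with $\U-t\bigl(v_i-\sum_{j\ne i}V(v_i,u_j)u_j\bigr)e_i$ and invoking \eqref{Diff}), then runs Moser iteration with test function $|u_i|^{\alpha-2}u_i$; your claim $A(\V)\preceq I_k$ after truncating a single component is not justified, since the off-diagonal entries $\int v_i u_j$ need not vanish. For the diameter, the comparison with $\U\,1_{\Rn\setminus B_{x,r}}$ by itself only produces the \emph{upper} bound $\Hs(J_\U\cap B_{x,r})\le C\bigl(f(r)+f'(r)\bigr)$ with $f(r)=|\{\U\neq0\}\cap B_{x,r}|$; the lower density $f(r)\ge cr^n$ still needs the isoperimetric inequality and integration of $f(r)^{1-1/n}\le Cf'(r)$, which your sketch omits.
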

Estimates of the form $|\U|\geq \delta 1_{\{\U\neq 0\}}$ for solution of elliptic equations with Robin boundary conditions appear in \cite{BL14}, \cite{BG15}, \cite{CK16}, see also \cite{BMV21} in a context without free discontinuity. It is a crucial steps to show the regularity of the function $\U$; once $\U$ is known to take values between two positive bounds, then it may be seen as a quasi-minimizer of the Mumford-Shah functional $\int_{\Rn}|\nabla\U|^2\mathrm{d}\Ln+\Hs(J_\U)$ on which the techniques used to show the regularity of Mumford-Shah minimizers (see \cite{DCL89}) may be extended (see \cite{CK16}, \cite{BL14}).
\begin{proof}
We show, in order, that the eigenvalues $\lambda_i(\U;\beta)$ are bounded above and below, the $L^\infty$ bound on $\U$, the lower bound on $\U_{|\left\{\U\neq 0\right\}}$, a lower bound on the Lebesgue density of $\left\{\U\neq 0\right\}$, and then the boundedness of the support.
\begin{itemize}[label=\textbullet]
\item Since $|\left\{\U\neq 0\right\}|\leq  \F_\gamma(\U)/\gamma$, then by the Faber-Krahn inequality with Robin Boundary conditions (as proved in \cite{BG10}) $\lambda_1(\U;\beta)\geq \lambda_1(\B^{|\F_\gamma(\U)|/\gamma};\beta)=:\lambda$.\\
In a similar way, since
\[F(\lambda,\hdots,\lambda,\lambda_k(\U;\beta))\leq F(\lambda_1(\U;\beta),\hdots,\lambda_k(\U;\beta))\leq \F_\gamma(\U)\]
and $F$ diverges when its last coordinate does, so $\lambda_k(\U;\beta)$ is bounded by a constant $\Lambda>0$ that only depends on the behaviour of $F$ and $\F_\gamma(\U)$. Let us write:
\begin{align*}
a&=\inf_{\frac{1}{2}\lambda\leq \lambda_1\leq \lambda_2\leq\hdots\leq \lambda_k\leq 2\Lambda}\inf_{i=1,\hdots,k}\frac{\partial F}{\partial^\pm \lambda_i}(\lambda_1,\hdots,\lambda_k),\\
b&=\sup_{\frac{1}{2}\lambda\leq \lambda_1\leq \lambda_2\leq\hdots\leq \lambda_k\leq 2\Lambda}\sup_{i=1,\hdots,k}\frac{\partial F}{\partial^\pm \lambda_i}(\lambda_1,\hdots,\lambda_k).\end{align*}
$a$ and $b$ are positive and only depend on $\F_\gamma(\U)$ and the behaviour of $F$.
\item For the $L^\infty$ bound we use a Moser iteration procedure (see for instance \cite[Th 4.1]{HL11} for a similar method). We begin by establishing that $u_i$ is an eigenfunction of $\lambda_i(\U;\beta)$ in a variational sense. \\
Let $v_i\in \mathcal{U}_1$ be such that $\left\{v_i\neq 0\right\}\subset \left\{\U\neq 0\right\}$ and $J_{v_i}\subset J_\U$, we show that $V(u_i,v_i)=0$, where
\[V(u_i,v_i):=\int_{\Rn}\nabla u_i\cdot\nabla v_i\mathrm{d}\Ln+\beta\int_{J_\U}u_i v_i\mathrm{d}\Hs-\lambda_i(\U;\beta)\int_{\R^n}u_i v_i \mathrm{d}\Ln.\]
For this consider $\U_t=\U-t (v_i-\sum_{j\neq i}V(v_i,u_j)u_j) e_i$. Since $A(\U_t)$ converges to $I_k$, $\U_t$ is linearly independant for a small enough $t$ and
\[\F_\gamma(\U)\leq \F_\gamma(\U_t).\]
This implies, since $\left\{\U_t\neq 0\right\}\subset\left\{\U\neq 0\right\}$, that
\[F(\lambda_1(\U;\beta),\hdots,\lambda_k(\U;\beta))\leq F(\lambda_1(\U_t;\beta),\hdots,\lambda_k(\U_t;\beta)),\]
which may also be written
\[G\Big[B(\U)\Big]\leq G\Big[A(\U_t)^{-\frac{1}{2}}B(\U_t)A(\U_t)^{-\frac{1}{2}}\Big].\]
Now, $A(\U_t)^{-\frac{1}{2}}B(\U_t)A(\U_t)^{-\frac{1}{2}}=B(\U)-(e_i e_i^*)V(u_i,v_i)t+\mathcal{O}(t^2)$. Suppose that $V(u_i,v_i)>0$. Let $i'$ be the lowest index such that $\lambda_{i'}(\U;\beta)=\lambda_i(\U;\beta)$. Then knowing the directional derivative of $G$ given in \eqref{Diff} we obtain (for $t>0$)
\[G\Big[A(\U_t)^{-\frac{1}{2}}B(\U_t)A(\U_t)^{-\frac{1}{2}}\Big]=G\Big[B(\U)\Big]+tV(u_i,v_i)F_0\left(0,0,\hdots,0,-1,0,\hdots,0\right)+\underset{t\to 0}{o}(t),\]
which is less than $G\Big[B(\U)\Big]$ for a small enough $t$: this is a contradiction. When $V(u_i,v_i)\leq 0$ we may do the same by replacing $v_i$ with $-v_i$. Thus for all $v_i$ with support and jump set included in the support and jump set of $\U$

\[\int_{\Rn}\nabla u_i\cdot\nabla v_i\mathrm{d}\Ln+\beta\int_{J_\U}u_i v_i\mathrm{d}\Hs=\lambda_i(\U;\beta)\int_{\R^n}u_i v_i \mathrm{d}\Ln.\]

Now we use Moser iteration methods. Let $\alpha\geq 2$ be such that $u_i\in L^\alpha$, then by taking $v_i$ to be a truncation of $|u_i|^{\alpha-2}u_i $ in $[-M,M]$ for $M\to\infty$ in the variational equation above, we obtain

\[\int_{\Rn}(\alpha-1) |u_i|^{\alpha -2}|\nabla u_i|^2\mathrm{d}\Ln+\int_{J_\U}(|\overline{u_i}|^{\alpha}+|\underline{u_i}|^{\alpha})\mathrm{d}\Hs=\lambda_i(\U;\beta)\int_{\Rn}|u_i|^\alpha \mathrm{d}\Ln.\]
Using the embedding $BV(\Rn)\hookrightarrow L^{\frac{n}{n-1}}(\Rn)$ we have:
\begin{align*}
\Vert u_i^\alpha\Vert_{L^{\frac{n}{n-1}}}&\leq C_n\Vert u_i^\alpha\Vert_{BV}\\
&\leq C_n\left(\int_{\Rn} |\nabla (|u_i|^{\alpha -1}u_i)|\mathrm{d}\Ln+\int_{J_\U}(|\overline{u_i}|^{\alpha}+|\underline{u_i}|^{\alpha})\mathrm{d}\Hs\right)\\
&\leq C_n\left(\int_{\Rn}\alpha\left(|u_i|^\alpha+|u_i|^{\alpha-2}|\nabla u_i|^2\right)\mathrm{d}\Ln+\int_{J_\U}(|\overline{u_i}|^{\alpha}+|\underline{u_i}|^{\alpha})\mathrm{d}\Hs\right)\\
&\leq C_{n,\beta}\left(\alpha+\lambda_i(\U;\beta)\right)\Vert u_i\Vert_{L^\alpha}^{\alpha}.\\
\end{align*}
And so $\Vert u_i\Vert_{L^{\frac{n}{n-1}\alpha}}\leq \Big[ C_{n,\beta}\left(\alpha+\lambda_i(\U;\beta)\right)\Big]^\frac{1}{\alpha}\Vert u_i\Vert_{L^\alpha}$. We may apply this iteratively with $\alpha_p=2\left(\frac{n}{n-1}\right)^p$ to obtain an $L^\infty$ bound of $u_i$ that only depends on $n,\beta$ and $\lambda_i(\U;\beta)$. In fact using the Faber-Krahn inequality for Robin conditions $\lambda_i(\U;\beta)\geq \lambda_1\left(\B^{|\{\U\neq 0\}|};\beta\right)$ the previous inequality applied to $\alpha_p$ may be simplified into
\[\log\left(\frac{\Vert u_i\Vert_{L^{\alpha_{p+1}}}}{\Vert u_i\Vert_{L^{\alpha_{p}}}}\right)\leq \left(C(n,\beta,|\{\U\neq 0\}|)(p+1)+\frac{1}{2}\log\lambda_i(\U;\beta)\right)\left(\frac{n-1}{n}\right)^p,\]
and summing in $p$ we obtain an estimate of the form $\Vert u_i\Vert_{L^\infty}\leq C(n,\beta,|\{\U\neq 0\}|)\lambda_i(\U;\beta)^\frac{n}{2}$.
\item Lower bound on $\U$: our goal is first to obtain an estimate of the form
\begin{equation}\label{EstCaff}
\Tr\Big[B_t\Big]+|\left\{ 0<|\U|\leq t\right\}|\leq \frac{1}{\eps}\Tr\Big[\beta_t\Big],
\end{equation}
where $\eps>0$ is a constant that only depends on the parameters and
\begin{align*}
(B_t)_{i,j}&=\int_{|\U|\leq t}\nabla u_i\cdot\nabla u_j\mathrm{d}\Ln +\beta\int_{J_\U}\left(\overline{u_i 1_{|\U|\leq t}}\cdot\overline{u_j 1_{|\U|\leq t}}+\underline{u_i 1_{|\U|\leq t}}\cdot\underline{u_j 1_{|\U|\leq t}}\right)\mathrm{d}\Hs,\\
(\beta_t)_{i,j}&=\beta\int_{\partial^*\left\{ |\U|>t\right\}\setminus J_\U}u_i u_j\mathrm{d}\Hs.
\end{align*}
This is intuitively what we obtain by comparing $\U$ and $\U1_{\left\{|\U|>t\right\}}$. From this we will derive a lower bound of $\inf_{\U\neq 0}|\U|$ with similar arguments as what was done in \cite{CK16}. Suppose \eqref{EstCaff} does not hold. This means that, since $B_t\leq B$ and $|\left\{\U\neq 0\right\}|\leq \F_\gamma(\U)$,
\[\beta_t\leq \left(B(\U)+\F_\gamma(\U)I_k\right)k\eps\leq c\eps B(\U),\]
for a certain $c>0$ since $B(\U)\geq \lambda I_k$. Let us now compare $\U$ with $\U_t=\U1_{\left\{ |\U|>t\right\}}$; this function is admissible for a small enough $t$ because $A(\U_t)=I_k-A\left(\U 1_{\left\{ 0<|\U|\leq t\right\} }\right)$, so
\[\Vert A(\U_t)-I_k\Vert\leq Ct^2|\left\{ 0<|\U|\leq t\right\}|.\]
Notice also that $B(\U_t)=B(\U)-B_t+\beta_t$. Then the optimality condition $\F_\gamma(\U)\leq \F_\gamma(\U_t)$ gives
\begin{equation}\label{eqInter}
G\Big[B(\U)\Big]+\gamma|\left\{0<|\U|\leq t\right\}|\leq G\Big[A(\U_t)^{-\frac{1}{2}}(B(\U)-B_t+\beta_t)A(\U_t)^{-\frac{1}{2}}\Big].
\end{equation}
We first show that $B_t$ is small enough for small $t$. With our hypothesis on $\beta_t$ and the fact that $A(\U_t)^{-\frac{1}{2}}\leq I_k+Ct^2I_k\leq (1+c\eps)I_k$ for a small enough $t$
\[G\Big[B(\U)\Big]+\gamma|\left\{0<|\U|\leq t\right\}|\leq G\Big[\Big[1+2c\eps\Big]B(\U)-B_t\Big].\]
So $G\Big[B(\U)\Big]\leq G\Big[(1+2c\eps)B(\U)-B_t\Big]$. Now, there exists $i\in\left\{1,\hdots,k\right\}$ such that
\[\Lambda_i\Big[(1+c\eps)B(\U)-B_t\Big]\leq (1+c\eps)\Lambda_i\Big[B(\U)\Big]-\frac{1}{k}\Tr\Big[B_t\Big].\]
And so, using the monotonicity of $F$ and the definition of $a,b$ in the first part of the proof:
\begin{align*}
G\Big[B(\U)\Big]&\leq G\Big[(1+c\eps)B(\U)-B_t\Big]\\
&\leq F\left((1+c\eps)\lambda_1(\U;\beta),\hdots,(1+c\eps)\lambda_{i-1}(\U;\beta),(1+c\eps)\lambda_i(\U;\beta)-\frac{1}{k}\Tr\Big[B_t\Big],\hdots,(1+c\eps)\lambda_k(\U;\beta)\right)\\
&\leq G\Big[B(\U)\Big]+bc\eps\Tr\Big[B(\U)\Big]-a\min\left(\frac{1}{k}\Tr\Big[B_t\Big],\frac{\lambda}{2}\right).
\end{align*}

With a small enough $\eps$, we obtain $\Tr\Big[B_t\Big]\leq \frac{\lambda}{2}$. Now we may come back to \eqref{eqInter}, and using the fact that $A(\U_t)^{-\frac{1}{2}}\leq (1+Ct^2|\left\{0<|\U|\leq t\right\}|)I_k$ we obtain
\[G\Big[B(\U)\Big]+\gamma|\left\{0<|\U|\leq t\right\}|\leq G\Big[B(\U)+\beta_t-B_t+Ct^2|\left\{0<|\U|\leq t\right\}|I_k\Big],\]
and so with the monotonicity of $G$
\[G\Big[B(\U)\Big]+\gamma|\left\{0<|\U|\leq t\right\}|\leq G\Big[B(\U)\Big] +b\Tr\Big[\beta_t\Big]-a\Tr\Big[B_t\Big]+Cbt^2|\left\{0<|\U|\leq t\right\}|.\]
In particular, for a small enough $t>0$ (depending only on the parameters)
\[a\Tr\Big[B_t\Big]+\frac{\gamma}{2}|\left\{0<|\U|\leq t\right\}|\leq b\Tr\Big[\beta_t\Big],\]
and so we obtained that there is a big enough constant $C>0$, and a small enough $t_1>0$, such that for any $t\in (0,t_1]$
\begin{equation}\label{EstCaff2}
\Tr\Big[B_t\Big]+|\left\{0<|\U|\leq t\right\}|\leq C\Tr\Big[\beta_t\Big].
\end{equation}
Now we let
\[V:=|\U|=\sqrt{u_1^2+\hdots+u_k^2}\left(\geq \delta 1_{\left\{ \U\neq 0\right\}}\right).\]
Let $f(t)=\int_0^t \tau\Hs(\partial^*\left\{ V> t\right\}\setminus J_\U)\mathrm{d}\tau$. Notice that the right-hand side of \eqref{EstCaff2} is $Ctf'(t)$. Then for any $t\leq t_1$:
\begin{align*}
f(t)&\leq \int_0^t \tau\Hs(\partial^*\left\{ V> \tau\right\}\setminus J_V)d\tau=\int_{\om_t}V|\nabla V|\mathrm{d}\Ln\\
&\leq |\left\{ 0<V\leq t\right\}|^{\frac{1}{2n}}\left(\int_{\left\{ 0<V\leq t\right\}}|\nabla V|^2\mathrm{d}\Ln\right)^{\frac{1}{2}}\left(\int_{\left\{ 0<V\leq t\right\}}(V^2)^{\frac{n}{n-1}}\mathrm{d}\Ln\right)^\frac{n-1}{2n}\\
&\leq C(tf'(t))^{\frac{1}{2n}+\frac{1}{2}}\left(|D(V^2)|(\left\{ 0<V\leq t\right\})\right)^\frac{1}{2}\\
&\leq C(tf'(t))^{\frac{1}{2n}+\frac{1}{2}}\left(\int_{\left\{ 0<V\leq t\right\}}V|\nabla V|\mathrm{d}\Ln +\int_{J_V\cap \left\{ 0<V\leq t\right\}}V^2\mathrm{d}\Hs\right)^\frac{1}{2}\\
&\leq C(tf'(t))^{\frac{1}{2n}+\frac{1}{2}}\left(t|\left\{ 0<V\leq t\right\}|^{\frac{1}{2}}\left(\int_{\left\{ 0<V\leq t\right\}}|\nabla V|^2\mathrm{d}\Ln\right)^{\frac{1}{2}} +\int_{J_V\cap \left\{ 0<V\leq t\right\}}V^2\mathrm{d}\Hs\right)^\frac{1}{2}\\
&\leq C(tf'(t))^{1+\frac{1}{2n}}.
\end{align*}
The constant $C>0$ above depends only on the parameters and may change from line to line. This implies that $f'(t)f(t)^{-\frac{2n}{2n+1}}\geq ct^{-1}$, so for any $t\in ]0,t_1[$ such that $f(t)>0$ this may be integrated from $t$ to $t_1$ to obtain
\[\frac{1}{2n+1}f(t_1)^{\frac{1}{2n+1}}\geq c\log(t_1/t).\]
Since $f(t_1)\leq |\left\{\U\neq 0\right\}|^{\frac{1}{2}}\left(\int_{\Rn}|\nabla V|^2\right)^\frac{1}{2}\leq \sqrt{k\F_\gamma(\U)\Lambda/\gamma}$, then $t$ is bounded below in terms of the parameters of the problem. This means that $f(\delta)=0$ for a certain explicit $\delta>0$. In particular, $(\nabla\U)1_{\left\{ |\U|\leq \delta\right\}}=0$, and by comparing $\U$ with $\U_\delta$, \eqref{EstCaff} becomes $|\left\{ 0<|\U|\leq \delta\right\}|\leq 0$, so we obtained
\[|\U|\geq \delta 1_{\left\{ \U\neq 0\right\}}.\]

\item To show the support $\left\{\U\neq 0\right\}$ (or its connected components) is bounded, we begin by showing a lower estimate for the Lebesgue density on this set. This is obtained by comparing $\U$ with $\U_r:=u1_{\mathbb{R}^n\setminus \B_r}$ where $\B_r$ is a ball of radius $r>0$.\bigbreak

As previously, we first need to check that $\U_r$ is admissible for any small enough $r>0$. With the $L^\infty$ bound on $\U$, we get $|A(\U_r)-I_k|\leq C|\left\{ \U\neq 0\right\}\cap \B_r|$. In particular, $|A(\U_r)-I_k|\leq C r^n$, which proves that $A(\U_r)$ is invertible for a small enough $r$.\bigbreak
Let $f(r)=|\left\{ \U\neq 0\right\}\cap \B_r|$. By comparing $\U$ with $\U_r$ we obtain
\[G\Big[B(\U)\Big]+\gamma|\left\{\U\neq 0\right\}\cap \B_r|\leq G\Big[A_r(B-B_r+\beta_r)A_r\Big],\]
where $A_r,B_r,\beta_r$ are defined as previously: $A_r=A(\U_r)^{-\frac{1}{2}}$ and

\begin{align*}
(B_r)_{i,j}&=\int_{\B_r}\nabla u_i\cdot\nabla u_j \mathrm{d}\Ln+\beta\int_{J_\U}\left(\overline{u_i 1_{\B_r}}\cdot\overline{u_j 1_{\B_r}}+\underline{u_i 1_{\B_r}}\cdot\underline{u_j 1_{\B_r}}\right)\mathrm{d}\Hs,\\
(\beta_r)_{i,j}&=\beta\int_{\partial\B_r\setminus J_\U}u_i u_j\mathrm{d}\Hs.
\end{align*}
With the same argument as what we did to obtain the lower bound, this estimate implies that for any $r\in ]0,r_0]$ where $r_0$ is small enough

\[c\Tr\Big[B_r\Big]\leq \Tr\Big[\beta_r\Big]+f(r),\]
for a certain $c>0$. With the $L^\infty$ bound and the lower bound on $\U$, we deduce that for a certain constant $C>0$:
\[\Hs(\B_r\cap J_\U)\leq C\left(f(r)+\Hs(\partial \B_r\cap \left\{ \U\neq 0\right\})\right).\]

Notice that $f'(r)=\Hs(\partial \B_r\cap \left\{ \U\neq 0\right\})$, so with the isoperimetric inequality
\[c_n f(r)^{1-\frac{1}{n}}\leq \Hs(\B_r\cap J_\U)+\Hs(\partial \B_r\cap \left\{ \U\neq 0\right\})\leq C(f(r)+f'(r)).\]
Since $f(r)\leq Cr^n\rightarrow 0$, we deduce that for a certain constant $C>0$ and any small enough $r$ ($r<r_0$) we have
\[f(r)^{1-\frac{1}{n}}\leq Cf'(r).\]
Suppose now that $f(r)>0$ for any $r>0$. Then by integrating the above estimate from $0$ to $r_0$, we obtain that for a certain constant $c>0$ and any $r\in [0,r_0]$
\[|\left\{ \U\neq 0\right\}\cap \B_{x,r}|\geq cr^n.\]
Consider now a system of points $S\subset\Rn$ such that for any $x\in S$ and any $r>0$, $|\left\{\U\neq 0\right\}\cap\B_{x,r}|>0$, and such that for any distinct $x,y\in S$, $|x-y|\geq 2r_0$. Then
\[\F_\gamma(\U)\geq \gamma |\left\{ \U\neq 0\right\}|\geq \gamma\sum_{x\in S}|\left\{ \U\neq 0\right\}\cap \B_{x,r_0}|\geq c\gamma r_0^n \mathrm{Card}(S),\]
so $\mathrm{Card}(S)$ is bounded. Then by taking a maximal set of separated points $S$ as above, the balls $(\B_{x,2r_0})_{x\in S}$ cover $\left\{\U\neq 0\right\}$. This means in particular that the support of $u$ is bounded by a constant only depending on the parameters, up to a translation of the its connected components.

\end{itemize}
\end{proof}

\subsection{Existence of a relaxed minimizer with prescribed measure}

This section is dedicated to the proof of the following result.

\begin{proposition}\label{existence}
Let $m,\beta>0$, then there exists $\U\in \Uk$ that minimizes $\F$ in the admissible set $\Uk(m)$.
\end{proposition}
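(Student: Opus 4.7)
By Lemma \ref{PenalizationLemma}, it suffices to show, for an appropriate $\gamma>0$, that $\F_\gamma$ attains a minimum among linearly independent $\U\in\Uk$ with $|\{\U\neq 0\}|\leq m$. I proceed by induction on $k$, the base case $k=1$ being the Faber--Krahn inequality for Robin boundary conditions established in \cite{BG10, BG15}.

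\textbf{A priori setup.} Take a minimizing sequence $(\U^i)$ of $\F_\gamma$, normalized so that $A(\U^i)=I_k$ and $B(\U^i)$ is diagonal. The coercivity of $F$ in the last variable (hypothesis \eqref{HypF}) combined with the penalty $\gamma|\{\U^i\neq 0\}|$ gives uniform upper bounds on the eigenvalues $\lambda_j(\U^i;\beta)$, and hence uniform bounds on $\int|\nabla\U^i|^2\mathrm{d}\Ln$, on $\int_{J_{\U^i}}(|\overline{\U^i}|^2+|\underline{\U^i}|^2)\mathrm{d}\Hs$, and on $\int|\U^i|^2\mathrm{d}\Ln=k$. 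To gain geometric control, I first upgrade each $\U^i$ to an internal minimizer of a slightly perturbed functional $\F_{\gamma_i}$ with $\gamma_i\to\gamma$ (via an Ekeland-type argument); Proposition \ref{apriori} then applies, so up to translating each connected component of $\U^i$ one may assume it has diameter at most a fixed $R$, and the Lebesgue density lower bound implies the number of connected components is bounded uniformly in $i$.

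\textbf{Dichotomy and compact case.} Up to extraction and further translations, two configurations arise: either (i) $\mathrm{supp}(\U^i)$ lies in a common ball $B_{0,R'}$, or (ii) $\U^i=\V^i\oplus\W^i$ with $\mathrm{dist}(\mathrm{supp}(\V^i),\mathrm{supp}(\W^i))\to+\infty$. In case (i), Proposition \ref{CompactnessLemma} yields a subsequence $\U^i\to\U$ in $L^2_{\mathrm{loc}}$ with weak convergence of $\nabla\U^i$ and $D^s\U^i$. Strong $L^2$ convergence on the common bounded support gives $A(\U)=\lim A(\U^i)=I_k$, so $\U$ is linearly independent. Lower semicontinuity of the gradient and jump integrals provides $\lambda_j(\U;\beta)\leq \liminf\lambda_j(\U^i;\beta)$. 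The pointwise estimate $|\U^i|\geq\delta 1_{\{\U^i\neq 0\}}$ from Proposition \ref{apriori} passes to the limit, yielding $|\{\U\neq 0\}|\leq\liminf|\{\U^i\neq 0\}|\leq m$. Continuity of $F$ then gives $\F_\gamma(\U)\leq\liminf\F_\gamma(\U^i)$, so $\U$ is the desired minimizer.

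\textbf{Splitting case and induction.} In case (ii), after a further extraction a fixed integer $p\in\{1,\dots,k-1\}$ of the $k$ smallest eigenvalues of $\U^i$ are contributed by $\V^i$ and $k-p$ by $\W^i$. Setting $m_1=\lim|\{\V^i\neq 0\}|$ and $m_2=m-m_1$, the sequences $\V^i$ and $\W^i$ become minimizing sequences of reduced functionals $\tilde F_1$ on $\Up(m_1)$ and $\tilde F_2$ on $\mathcal{U}_{k-p}(m_2)$, obtained from $F$ by partial infimization over the interleaving of spectra with the other piece. These reduced functionals inherit the hypotheses \eqref{HypF}, so the induction hypothesis applied to each produces minimizers $\V,\W$, and $\V\oplus\W$ is a minimizer of $\F_\gamma$ with measure at most $m$. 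The main obstacle is precisely this splitting step: one must verify that decoupling the interleaved spectra leads to reduced functionals of the right form, which reduces to checking that partial minimization of a componentwise strictly increasing and coercive $F$ preserves these properties.
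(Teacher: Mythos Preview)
Your overall architecture (induction on $k$, dichotomy into a compact and a splitting case, reduced functionals in the splitting case) matches the paper's. The essential gap is in how you obtain geometric control on the minimizing sequence.

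\textbf{The Ekeland step does not deliver what you need.} You propose to ``upgrade each $\U^i$ to an internal minimizer of a slightly perturbed functional $\F_{\gamma_i}$'' via Ekeland, and then invoke Proposition~\ref{apriori}. But Proposition~\ref{apriori} is proved for \emph{exact internal minimizers}: every step of its proof compares $\U$ with a specific competitor ($\U 1_{\{|\U|>t\}}$, $\U 1_{\Rn\setminus \B_r}$, etc.) and uses the resulting \emph{sharp} inequality $\F_\gamma(\U)\le\F_\gamma(\V)$. An Ekeland almost-minimizer only gives $\F_\gamma(\U)\le\F_\gamma(\V)+\epsilon\, d(\U,\V)$ for some metric $d$, and you have not specified which metric, nor why the admissible class is complete for it, nor why the extra $\epsilon\, d$ term is absorbable in the density and nondegeneracy arguments. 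Moreover, ``internal minimizer'' means optimality against competitors whose support is contained in that of $\U$; Ekeland gives optimality against competitors \emph{close in $d$}, which is a different constraint. So Proposition~\ref{apriori} does not apply to your $\U^i$ as stated, and the uniform diameter/density bounds you rely on for the dichotomy and for passing the measure constraint to the limit are not justified.

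\textbf{How the paper avoids this.} The paper never tries to make the minimizing sequence itself satisfy a priori bounds. Instead it uses the elementary concentration Lemma~\ref{Conc} (valid for \emph{any} $\U\in\Uk$, no minimality needed) to locate a cube of definite mass, then splits $\U^i$ into a local part $\U^i 1_{\B_{R^i}}$ and a distant part $\U^i 1_{\B_{R^i}^c}$ along a well-chosen radius. Proposition~\ref{apriori} is invoked only \emph{after} the induction hypothesis has produced genuine minimizers $\V,\W$ of the reduced functionals, to ensure these have bounded support so that $\V\oplus\W$ can be formed. This is the step you should replace your Ekeland argument with.

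Two smaller points. First, your opening use of Lemma~\ref{PenalizationLemma} is backwards: as stated, that lemma takes a minimizer of $\F$ in $\Uk(m)$ as input, so it cannot be used to \emph{reduce} existence to the penalized problem before existence is known (the underlying rescaling argument can be extracted, but you should say so). Second, your description of the reduced functionals as ``partial infimization over the interleaving'' is not what is done: the paper freezes the other piece's limiting eigenvalues $\mu_1,\dots,\mu_{k-p}$ as constants and sets $\F_{\text{loc}}(\V)=F\big([\lambda_1(\V;\beta),\dots,\lambda_p(\V;\beta),\mu_1,\dots,\mu_{k-p}]^{\mathfrak S_k}\big)$, which manifestly inherits \eqref{HypF}; an actual infimization would not.
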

We begin with a lemma that will help us to show that any minimizing sequence of $\F$ in $\Uk(m)$ has concentration points, meaning points around which the measure of the support is bounded below by a positive constant.
\begin{lemma}\label{Conc}
Let $\U\in \Uk$, we let $K_p:=p+[-\frac{1}{2},\frac{1}{2}]^n$, then there exists $p\in \mathbb{Z}^n$ such that
\[|\{\U\neq 0\}\cap K_p|\geq \left(\frac{c_n\Vert \U\Vert_{L^2(\Rn)}^2}{\Vert \U\Vert_{L^2(\Rn)}^2+\int_{\Rn}|\nabla\U|^2\mathrm{d}\Ln+\int_{J_\U}\left(|\overline{\U}|^2+|\underline{\U}|^2\right)\mathrm{d}\Hs}\right)^n.\]
\end{lemma}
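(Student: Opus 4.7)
The plan is to apply a local $BV$-Sobolev embedding on each unit cube $K_p$ to the scalar $BV$ function $|\U|^2=\sum_i u_i^2$, combine it with Hölder's inequality (using that $|\U|^2$ is supported in $\{\U\neq 0\}$), and sum the resulting estimates over $p\in\mathbb{Z}^n$.

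First I would establish the pointwise control of the total variation of $|\U|^2$. Using $\nabla u_i^2=2u_i\nabla u_i$ together with $2|u_i||\nabla u_i|\leq u_i^2+|\nabla u_i|^2$, and $\overline{u_i^2}-\underline{u_i^2}=\overline{u_i}^{\,2}-\underline{u_i}^{\,2}$ together with $|\overline{u_i}^{\,2}-\underline{u_i}^{\,2}|\leq \overline{u_i}^{\,2}+\underline{u_i}^{\,2}$, I obtain
\[
|D|\U|^2|(K_p) \;\leq\; \int_{K_p}\bigl(|\U|^2+|\nabla\U|^2\bigr)\,\mathrm{d}\Ln \;+\; \int_{K_p\cap J_\U}\bigl(|\overline{\U}|^2+|\underline{\U}|^2\bigr)\,\mathrm{d}\Hs.
\]

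Next, the $BV$-Sobolev embedding on a unit cube gives a dimensional constant $C_n>0$ with
\[
\bigl\||\U|^2\bigr\|_{L^{n/(n-1)}(K_p)} \;\leq\; C_n\Bigl(\bigl\||\U|^2\bigr\|_{L^1(K_p)}+|D|\U|^2|(K_p)\Bigr),
\]
and Hölder's inequality (since $\{|\U|^2\neq 0\}=\{\U\neq 0\}$) yields
\[
\bigl\||\U|^2\bigr\|_{L^1(K_p)} \;\leq\; |\{\U\neq 0\}\cap K_p|^{1/n}\,\bigl\||\U|^2\bigr\|_{L^{n/(n-1)}(K_p)}.
\]
Writing $\alpha:=\sup_p |\{\U\neq 0\}\cap K_p|$ and combining, I find for every $p$:
\[
\bigl\||\U|^2\bigr\|_{L^1(K_p)} \;\leq\; C_n\,\alpha^{1/n}\Bigl(\bigl\||\U|^2\bigr\|_{L^1(K_p)} + |D|\U|^2|(K_p)\Bigr).
\]

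Finally, summing over $p\in\mathbb{Z}^n$ (after translating the standard grid if needed so that the finite measure $(|\overline{\U}|^2+|\underline{\U}|^2)\Hs\lfloor J_\U$ charges no grid boundary, which is possible for almost every translation), and inserting the first-step bound on $|D|\U|^2|(K_p)$, I arrive at
\[
\Vert\U\Vert_{L^2(\Rn)}^2 \;\leq\; 2C_n\,\alpha^{1/n}\,\Bigl(\Vert\U\Vert_{L^2(\Rn)}^2 + \int_{\Rn}|\nabla\U|^2\,\mathrm{d}\Ln + \int_{J_\U}\bigl(|\overline{\U}|^2+|\underline{\U}|^2\bigr)\,\mathrm{d}\Hs\Bigr),
\]
which rearranges to the announced estimate with $c_n=1/(2C_n)$. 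The only slightly delicate point is the summation step, where I need to avoid double-counting the jump part on cube boundaries; this is handled by the generic-translation argument. Beyond that, the argument is a direct vectorial adaptation of the classical $BV$ concentration lemma.
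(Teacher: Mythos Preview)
Your argument is correct and follows precisely the approach the paper indicates: the paper's proof is simply the one-line remark that the lemma is a consequence of the embedding $BV(K_p)\hookrightarrow L^{n/(n-1)}(K_p)$ together with a reference, and your proof is a faithful fleshing-out of that scheme (apply the embedding to $|\U|^2$ on each cube, use H\"older against the support, and sum). The only technical care needed is exactly the one you flag, namely avoiding the grid boundaries when summing the jump part, and your generic-translation remedy is standard.
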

\begin{proof}
It is the consequence of the $BV(K_p)\hookrightarrow L^\frac{n}{n-1}(K_p)$ embedding, see \cite[lemma 12]{BGN21}.
\end{proof}

The following lemma makes a straightforward link between minimizers of $\F$ with fixed volume and interior minimizers of $\F_\gamma$ for a sufficiently small $\gamma$, which means that all the a priori estimates apply.
\begin{lemma}\label{PenalizationLemma}
Let $\U\in \Uk$ be a minimizer of $\F$ in the admissible set
\[\left\{ \V\in\Uk:\ \V\text{ is linearly independant and }|\{\V\neq 0\}|=m\right\}.\]
Then there exists $\gamma>0$ depending only on $(n,m,\beta,\F(u),F)$ such that $\U$ is a minimizer of $\F_\gamma$ in the admissible set
\[\left\{ \V\in\Uk:\ \V\text{ is linearly independant and }|\{\V\neq 0\}|\in ]0,m]\right\}.\]
\end{lemma}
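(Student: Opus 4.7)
\medskip
\noindent\textbf{Proof proposal.}
The plan is to compare any competitor $\V$ of measure $m_0\in(0,m]$ with its dilation to measure $m$, where the minimality of $\U$ in $\Uk(m)$ can be applied directly. The ``penalization gain'' $\gamma(m-m_0)$ comes from the scaling law of Robin eigenvalues combined with the strict monotonicity of $F$.

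Given $\V\in\Uk$ linearly independent with $m_0:=|\{\V\neq 0\}|\in(0,m]$, I would set $\lambda=(m/m_0)^{1/n}\geq 1$ and $\V'(x):=\V(x/\lambda)$, so that $\V'\in\Uk(m)$. Direct computation of the Rayleigh quotient yields $\lambda_i(\V';\beta)=\lambda^{-2}\lambda_i(\V;\lambda\beta)$, and since $\lambda\geq 1$, bounding the boundary term linearly in $\beta$ gives $\lambda_i(\V;\lambda\beta)\leq \lambda\,\lambda_i(\V;\beta)$, hence
\[\lambda_i(\V';\beta)\leq \lambda^{-1}\lambda_i(\V;\beta)\quad\text{for all }i.\]
By the Faber-Krahn inequality applied both to $\V$ and $\V'$ (both with support of measure at most $m$), all of their Robin eigenvalues are bounded below by $\lambda_0:=\lambda_1(\B^m;\beta)>0$.

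I would then make a dichotomy on $\F(\V)$. If $\F(\V)\geq \F(\U)+m$, then any $\gamma\in(0,1]$ makes the target inequality $\F(\U)+\gamma m\leq \F(\V)+\gamma m_0$ automatic. Otherwise $\F(\V)\leq \F(\U)+m$, and coercivity in the last coordinate (hypothesis \eqref{HypF}) forces $\lambda_k(\V;\beta)\leq \Lambda$ for some $\Lambda$ depending only on $(\F(\U),m,F)$. Each coordinate on the segment joining $(\lambda_i(\V'))_i$ to $(\lambda_i(\V))_i$ then lies in the compact set $[\lambda_0,\Lambda]$, so the directional derivatives of $F$ admit a uniform lower bound $a>0$ depending only on $(n,m,\beta,\F(\U),F)$. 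Combining the monotonicity of $F$ with the concavity estimate $1-(m_0/m)^{1/n}\geq (m-m_0)/(nm)$ yields
\[\F(\V)-\F(\V')\geq a(1-\lambda^{-1})\lambda_1(\V;\beta)\geq \frac{a\lambda_0}{nm}(m-m_0).\]
Since $\V'\in\Uk(m)$, the minimality of $\U$ gives $\F(\U)\leq \F(\V')$, and so $\F(\U)+\gamma_0 m\leq \F(\V)+\gamma_0 m_0$ with $\gamma_0:=a\lambda_0/(nm)$. The final choice is $\gamma:=\min(1,\gamma_0)$, which depends only on $(n,m,\beta,\F(\U),F)$.

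The main subtle point is to keep the eigenvalues of any relevant competitor in a fixed compact range $[\lambda_0,\Lambda]$: this is what allows us to leverage the strict (but only pointwise) positivity of $\partial F/\partial^\pm \lambda_i$ coming from \eqref{HypF}. The dichotomy on $\F(\V)$ handles the upper bound while Faber-Krahn handles the lower bound uniformly in $m_0\leq m$, even though $\lambda\to+\infty$ as $m_0\to 0$.
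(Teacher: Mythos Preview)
Your proof is correct and follows essentially the same route as the paper: dilate $\V$ to measure $m$, invoke the minimality of $\U$ there, use the Robin scaling $\lambda_i(\V';\beta)\leq(m_0/m)^{1/n}\lambda_i(\V;\beta)$, confine the eigenvalues to a fixed compact box via Faber--Krahn below and coercivity of $F$ above, and conclude by the strict lower bound $a>0$ on the partial derivatives together with $1-(m_0/m)^{1/n}\geq (m-m_0)/(nm)$. The only cosmetic differences are that the paper bounds $\delta=m_0/m$ from below (using that $\lambda_k(\V;\beta)\leq\Lambda$ forces $m_0$ away from $0$) to control the lower edge of the box, whereas you apply Faber--Krahn directly to $\V'$; and your dichotomy on $\F(\V)$ makes explicit the ``without loss of generality $\lambda_k(\V;\beta)\leq\Lambda$'' step that the paper states tersely.
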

\begin{proof}
Consider a linearly independant $\V\in\Uk$ such that $\delta:=\frac{|\{\V\neq 0\}|}{|\{\U\neq 0\}|}\in ]0,1]$. Let $\W(x):=\V(x\delta^{1/n})$. Then the support of $\W$ has the same measure as $\U$ and so $\F(\U)\leq \F(\W)$. Looking how the matrices $A$ and $B$ scale with the change of variable $x\to x\delta^{-\frac{1}{n}}$ we obtain
\[A(\W)^{-\frac{1}{2}}B(\W)A(\W)^{-\frac{1}{2}}\leq \delta^{\frac{1}{n}}A(\V)^{-\frac{1}{2}}B(\V)A(\V)^{-\frac{1}{2}},\]
hence
\[\F(\U)\leq F\left(\delta^{\frac{1}{n}}\lambda_1(\V;\beta),\hdots,\delta^{\frac{1}{n}}\lambda_k(\V;\beta)\right).\]
By the Faber-Krahn inequality for Robin eigenvalues, $\lambda_1(\V;\beta)\geq \lambda_1(\B^{|\{\V\neq 0\}|};\beta)$. Moreover since $F$ diverges when its last coordinate does, we may suppose without loss of generality that $\lambda_k(\V;\beta)$ is bounded by a certain constant $\Lambda>0$ that does no depend on $\V$. This in turn means that $|\{\V\neq 0\}|$ is bounded below by a positive constant depending only on $n,\beta,\Lambda$ by the Faber-Krahn inequality, so $\delta$ is bounded below. Then by denoting $a$ the minimum of the partial derivatives of $F$ on $[\delta^{\frac{1}{n}}\lambda_1(\B^{m};\beta),\Lambda]^{k}$, we obtain
\[\F(\U)\leq \F(\V)-a(\lambda_1(\V;\beta)+\hdots+\lambda_k(\V;\beta))(1-\delta^{1/n})\leq \F(\V)-\frac{ka\lambda_1(\B^m;\beta)}{nm}(|\{\U\neq 0\}|-|\{\V\neq 0\}|).\]
This concludes the proof.
\end{proof}

We may now prove the main result of this section.
\begin{proof}
We proceed by induction on $k$. The main idea is that we either obtain the existence of a minimizer by taking the limit of a minimizing sequence, or we don't and in this case the minimizer is disconnected so it is the union of two minimizers of different functionals depending on strictly less than $k$ eigenvalues.\\
The initialisation for $k=1$ amounts to showing there is a minimizer for $\lambda_1(\U;\beta)$ in $\mathcal{U}_1(m)$: this has been done in \cite{BG15} and it is known to be the first eigenfunction of a ball of measure $m$.\\
Suppose now that $k\geq 2$ and the result is true up to $k-1$. Consider $(\U^i)_i$ a minimizing sequence for $\F$ in $\Uk(m)$. Then the concentration lemma \ref{Conc} may be applied to each $\U^i$ to find a sequence $(p^i)_i$ in $\mathbb{Z}^n$ such that
\begin{equation}\label{EstVol}
\liminf_{i\to\infty}|K_{p_i}\cap \{\U^i\neq 0\}|>0.
\end{equation}
We lose no generality in supposing, up to a translation of each $\U^i$, that $p^i=0$. Now with the compactness lemma \ref{CompactnessLemma}, we now up to extraction that $\U^i$ converges in $L^2_\text{loc}$ to a certain function $\U\in \Uk$ with local lower semicontinuity of its Dirichlet-Robin energy.\\
We now split $\U^i$ into a "local" part and a "distant" part; we may find an increasing sequence $R^i\to\infty$ such that
\[\U^i 1_{\B_{p^i,R^i}}\underset{L^2(\Rn)}{\longrightarrow} \U.\]
Up to changing each $R^i$ with a certain $\tilde{R^i}\in [\frac{1}{2}R^i,R^i]$, we may suppose that
\[\int_{\partial \B_{R^i}\setminus J_{\U^i}}|\U^i|^2\mathrm{d}\Hs=\underset{i\to \infty}{o}(1),\]
so that for each $i\in \{1,\hdots,k\}$
\[\lambda_i\left((\U1_{\B_{R^i}},\U1_{\B_{R^i}^c});\beta\right)\leq\lambda_i(\U;\beta)+\underset{i\to \infty}{o}(1).\]
Since $A\left(\U1_{\B_{R^i}},\U1_{\B_{R^i}^c}\right)$ and $B\left(\U1_{\B_{R^i}},\U1_{\B_{R^i}^c}\right)$ are block diagonal (with two blocks of size $k\times k$), then up to extraction on $i$ there is a certain $p\in \{0,1,\hdots,k\}$ such that
\[\Big[\lambda_1(\U^i 1_{\B_{R^i}};\beta),\hdots,\lambda_p(\U^i 1_{\B_{R^i}};\beta),\lambda_1(\U^i 1_{\B_{R^i}^c};\beta),\hdots,\lambda_p(\U^i 1_{\B_{R^i}^c};\beta)\Big]^{\mathfrak{S}_k}\leq (\lambda_1(\U^i;\beta),\hdots,\lambda_k(\U^i;\beta))+\underset{i\to \infty}{o}(1),\]
where $\Big[a_1,\hdots,a_k\Big]^{\mathfrak{S}_k}$ designate the ordered list of the values $(a_1,\hdots,a_k)$. There are now three cases:
\begin{itemize}[label=\textbullet]
\item $p=0$: we claim this can not occur. Indeed this would mean that $\U^i1_{\B_{R^i}^c}$ is such that
\[\F(\U^i1_{\B_{R^i}^c})\underset{i\to\infty}{\longrightarrow}\inf_{\Uk(m)}\F.\]
However, because of \eqref{EstVol} we know there is a certain $\delta>0$ such that for all big enough $i$ the measure of the support of $\U^i1_{\B_{R^i}^c}$ is less than $m-\delta$. Letting $\V^i=\U^i1_{\B_{R^i}^c}\left(\Big[\frac{m-\delta}{m}\Big]^\frac{1}{n}\cdot\right)$, $\V^i$ is a linearly independant sequence of $\Uk$, with support of volume less than $m$, such that $\F(\V^i)<\inf_{\Uk(m)}\F$ for a big enough $i$: this is a contradiction.
\item $p=k$. In this case $\U(=\lim_i \U^i1_{\B_{R^i}})$ is a minimizer of $\F$ with measure less than $m$. This is because, in addition to the fact that $\U^i1_{\B_{R^i}}$ converges to $\U$ in $L^2$, the lower semi-continuity result tells us that for each $z\in\R^k$:
\[z^*B(\U)z\leq \liminf_{i}z^*B(\U^i1_{\B_{R^i}})z,\]
thus for any $j=1,\hdots,k$, $\lambda_j(\U;\beta)\leq \liminf_{i}\lambda_j(\U^i1_{\B_{R^i}};\beta)$. And $|\{\U\neq 0\}|\leq \liminf |\{\U^i1_{B_{R^i}}\neq 0\}|\leq m$.
\item $1\leq p\leq k-1$. This is where we will use the induction hypothesis. We let:
\begin{align*}
\lambda_j&=\lim_{i\to\infty}\lambda_j(\U^i1_{\B_{R^i}};\beta),\ \forall j=1,\hdots,p & m_\text{loc}=\lim_{i\to\infty}|\{\U^i1_{B_{R^i}}\neq 0\}|,\\
\mu_j&=\lim_{i\to\infty}\lambda_j(\U^i1_{\B_{R^i}}^c;\beta),\ \forall j=1,\hdots,k-p & m_\text{dist}=\lim_{i\to\infty}|\{\U^i1_{B_{R^i}^c}\neq 0\}|.\\
\end{align*}
Then by continuity of $F$
\[\inf_{\Up(m)}\F=F\left(\Big[\lambda_1,\hdots,\lambda_p,\mu_1,\hdots,\mu_{k-p}\Big]^{\mathfrak{S}_k}\right).\]

Let us introduce
\[\F_{\text{loc}}:\V\in \mathcal{U}_{p}(m_\text{loc})\mapsto F\left(\Big[\lambda_1(\V;\beta),\hdots,\lambda_p(\V;\beta),\mu_1,\hdots,\mu_{k-p}\Big]^{\mathfrak{S}_k}\right).\]
This functional verify the hypothesis \eqref{HypF}, so following the induction hypothesis we know it has a minimizer $\V$. Moreover, according to the a priori bounds, $\V$ is known to have bounded support. Since $|\{\U^i1_{\B_{R^i}\neq 0}\}|\underset{i\to\infty}{\rightarrow}m_{\text{loc}}$, then by the optimality of $\V$ we get
\[\F_\text{loc}(\V)\leq \liminf_{i\to\infty}\F_\text{loc}(\U^i1_{\B_{R^i}})=F\left(\Big[\lambda_1,\hdots,\lambda_p,\mu_1,\hdots,\mu_{k-p}\Big]^{\mathfrak{S}_k}\right)=\inf_{\Uk(m)}\F.\]

Now consider the functional
\begin{align*}
\F_{\text{dist}}:\W\in \mathcal{U}_{k-p}(m_\text{dist})&\mapsto F\left(\Big[\lambda_1(\V;\beta),\hdots,\lambda_{p}(\V;\beta),\lambda_1(\W;\beta),\hdots,\lambda_{k-p}(\W;\beta)\Big]^{\mathfrak{S}_k}\right).
\end{align*}
With the same arguments, there is a minimizer $\W$ with bounded support. By comparing $\W$ with $\U^i1_{\B_{R^i}^c}$ we obtain
\[\F_\text{dist}(\W)\leq\liminf_{i\to\infty}\F_\text{dist}(\U^i1_{\B_{R^i}^c})=\F_\text{loc}(\V)\left(\leq \inf_{\Uk(m)}\F\right).\]

Since both $\V$ and $\W$ have bounded support we may suppose up to translation that their support are a positive distance from each other. Consider $\U=\V\oplus\W$, then $\F(\U)=\F_{\text{dist}}(\W)$ so $\U$ is a minimizer of $\F$ in $\Uk(m)$.
\end{itemize}
\end{proof}

\subsection{Regularity of minimizers}

Here we show that the relaxed global minimizer $\U$ that we found in the previous section corresponds to the eigenfunctions of an open set. What this means is that there is an open set $\Om$ that contains almost all the support of $\U$ such that $\U_{|\Om}\in H^1(\Om)^k$ and $\lambda_1(\Om;\beta),\hdots,\lambda_k(\Om;\beta)$ as defined in \eqref{defopen} are reached for $u_{1|\Om},\hdots,u_{k|\Om}$ respectively (provided $\U$ is normalized). Moreover we show that this open set $\partial\Om$ is Ahlfors regular and $\Hs(\partial\Om)<\infty$.\\
The main step is to show that $J_\U$ is essentially closed, meaning $\Hs\left(\overline{J_\U}\setminus J_\U\right)=0$. This is obvious for functions $\U$ that are eigenfunctions of a smooth open set $\Om$, since $J_\U=\partial\Om$, however an $\SBV$ function could have a dense jump set.\\
This is dealt using similar methods as in \cite{DCL89}, \cite{CK16}; we show that for every point $x\in \Rn$ with sufficiently low $(n-1)$ dimensional density in $J_\U$, the energy of $\U$ decreases rapidly around that point (this is lemma \ref{DecayLemma}). This is obtained by contradiction and blow-up methods, by considering a rescaling of a sequence of function that do not verify this estimate. 
As a consequence we obtain uniform lower bound on the $(n-1)$ dimensional density of $J_\U$, which implies that it is essentially closed.
We point out that in similar problems (see \cite{BL14}), the essential closedness of the jump set is obtained using the monotonicity of $\frac{1}{r^{n-1}}\left(\int_{\B_r}|\nabla u|^2+\Hs(J_u\cap \B_r)\right)\wedge c+c'r^{\alpha}$ for some constants $c,c',\alpha>0$ (where $u$ is a scalar solution of some similar free discontinuity problem). However our optimality condition (see \eqref{EstOpt} below) does not seem to be enough to establish a similar monotonicity property, namely due to the remainder on the right-hand side and the multiplicities of eigenvalues.
\begin{proposition}\label{regularity}
Let $\U$ be a relaxed minimizer $\F_\gamma$. Then $\Hs\left(\overline{J_\U}\setminus J_\U\right)=0$ and $\Om:=\left\{ \overline{|\U|}>0\right\}\setminus \overline{J_\U}$ is an open set such that $(u_1,\hdots,u_k)$ are the first $k$ eigenfunctions of the Laplacian with Robin boundary conditions on  $\Om$.
\end{proposition}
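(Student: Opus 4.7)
The plan is to adapt the De Giorgi--Carriero--Leaci strategy for the Mumford--Shah functional to our vectorial Robin problem, as carried out in \cite{DCL89, CK16} in the scalar case. The entire difficulty is concentrated in the claim that $J_\U$ is essentially closed: once $\Hs(\overline{J_\U}\setminus J_\U)=0$ is established, the openness of $\Om$ and the identification of $(u_1,\hdots,u_k)$ with the first $k$ Robin eigenfunctions follow readily.

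The first step is to turn the minimality of $\U$ into a local Mumford--Shah-type quasi-minimality. Given a small ball $\B_{x,r}$ and a competitor $\V \in \Uk$ that coincides with $\U$ outside $\B_{x,r}$ and is linearly independent, I would expand $A(\V)^{-\frac{1}{2}}B(\V)A(\V)^{-\frac{1}{2}}$ using \eqref{Diff}, invoke the two-sided bound $\delta 1_{\{\U\neq 0\}}\leq|\U|\leq M$ from Proposition \ref{apriori} to control the matrices $A(\V)-A(\U)$ and $B(\V)-B(\U)$, and use the volume penalization $\gamma|\{\U\neq 0\}|$ to absorb measure variations of order $r^n$. This should yield a quasi-minimality estimate of the form
\[\int_{\B_{x,r}}|\nabla\U|^2\,\mathrm{d}\Ln+\Hs(J_\U\cap\B_{x,r})\leq\int_{\B_{x,r}}|\nabla\V|^2\,\mathrm{d}\Ln+\Hs(J_\V\cap\B_{x,r})+Cr^n,\]
which is the vector-valued analogue of the condition used in \cite{CK16, BL14}.

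The central lemma is a decay lemma: there exist $\eta,\tau\in(0,1)$ and $r_0>0$ depending only on $n,k,\beta,F,\gamma$ such that, whenever $r<r_0$ and $\Hs(J_\U\cap\B_{x,r})\leq\eta r^{n-1}$, one has
\[\Hs(J_\U\cap\B_{x,\tau r})+\int_{\B_{x,\tau r}}|\nabla\U|^2\,\mathrm{d}\Ln\leq\tfrac{1}{2}\tau^{n-1}r^{n-1}.\]
I would prove this by blow-up and contradiction. Assume a sequence $(x^j,r^j)$ with $r^j\to 0$ along which the conclusion fails, and rescale $\U^j(y):=\U(x^j+r^j y)$ to the unit ball. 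By the compactness Proposition \ref{CompactnessLemma} and the hypothesis that the $(n-1)$-density of $J_\U$ vanishes, the blow-up limit has no jump set in $\B_1$, and by testing the quasi-minimality against the harmonic extension of the boundary trace of $\U^j$ in $\B_1$, this limit must be harmonic. Standard elliptic decay then contradicts the assumed failure. The delicate point is ensuring the harmonic competitor $\V$ remains linearly independent in $\Uk$ at the unscaled level (using the lower bound $|\U|\geq\delta 1_{\{\U\neq 0\}}$) and that the cross-terms between the $k$ components of $\U$ under the matrix-valued functional $G$ do not destroy the scalar decay estimate component-by-component.

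Once the decay lemma is available, a standard iteration shows that every $x$ where $\liminf_{r\to 0} r^{-(n-1)}\Hs(J_\U\cap\B_{x,r})<\eta$ in fact lies outside $J_\U$ (up to $\Hs$-negligible sets), while on $J_\U$ itself one has a matching upper bound $\Hs(J_\U\cap\B_{x,r})\leq Cr^{n-1}$ from the quasi-minimality by comparison with $\U 1_{\R^n \setminus \B_{x,r}}$. Combining these yields Ahlfors regularity $cr^{n-1}\leq\Hs(J_\U\cap\B_{x,r})\leq Cr^{n-1}$ for $x\in J_\U$, which propagates to $\overline{J_\U}$ and gives $\Hs(\overline{J_\U}\setminus J_\U)=0$. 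Consequently $\Om:=\{\overline{|\U|}>0\}\setminus\overline{J_\U}$ is open, each $u_i$ lies in $H^1(\Om)$ because its distributional derivative carries no singular part there, and the Euler--Lagrange equations derived in the proof of Proposition \ref{apriori} combined with \eqref{defopen} identify $(u_1,\hdots,u_k)$ with the first $k$ Robin eigenfunctions on $\Om$. I expect the main obstacle to be the decay lemma: the monotonicity formulas available in the scalar Mumford--Shah theory break down here because of the $Cr^n$ remainder and because $G$ is only directionally differentiable at eigenvalue multiplicities, so the blow-up argument must be executed with particular care to preserve admissibility and the coupling between components throughout the limiting process.
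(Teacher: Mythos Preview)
Your overall architecture---local optimality condition, blow-up decay lemma, density lower bound, essential closedness---matches the paper. The gap is in your first step: the scalar Mumford--Shah quasi-minimality
\[
\int_{\B_{x,r}}|\nabla\U|^2\,\mathrm{d}\Ln+\Hs(J_\U\cap\B_{x,r})\leq\int_{\B_{x,r}}|\nabla\V|^2\,\mathrm{d}\Ln+\Hs(J_\V\cap\B_{x,r})+Cr^n
\]
does \emph{not} follow from the minimality of $\F_\gamma$. Expanding via \eqref{Diff} gives a lower bound on $G_0\big[B(\V;\B_{x,r})-B(\U;\B_{x,r})\big]$, not on the trace. Since $G_0$ weighs the (block-)eigenvalues of $N:=B(\V;\B_{x,r})-B(\U;\B_{x,r})$ with possibly different coefficients $\partial^\pm F/\partial\lambda_i$, a competitor can decrease one diagonal entry of $B$ substantially while increasing another slightly, keeping $G_0[N]\geq 0$ yet making $\Tr[N]$ as negative as $-cr^{n-1}$. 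The two-sided bound $\delta\leq|\U|\leq M$ does not cure this; it only converts between the Robin term and $\Hs(J_\U)$. The paper in fact remarks just before the proposition that the monotonicity route of \cite{BL14} is unavailable here precisely for this reason.

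What the paper uses instead is the matrix-level condition \eqref{EstOpt}: with $E_0[N]:=\max\big(G_0[N],\Tr[N]\big)$,
\[
E_0\big[B(\V;\B_{x,r})-B(\U;\B_{x,r})\big]\geq -\Lambda r^n-\delta(r)\,|B(\U;\B_{x,r})|,
\]
where the extra remainder $\delta(r)|B(\U;\B_{x,r})|$ is \emph{not} $O(r^n)$. Two features are essential. First, $E_0$ is only positively homogeneous, but it satisfies $E_0[-S]<0$ for every nonzero $S\in S_k^+(\R)$; in the blow-up one tests against the harmonic extension $\boldsymbol{\varphi}$, for which $B(\boldsymbol{\varphi};\B)\leq B(\V;\B)$ \emph{as matrices}, so $N\leq 0$ and $E_0[N]\geq 0$ forces $N=0$, hence harmonicity of the limit. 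Second, because of the $\delta(r)|B(\U;\B_{x,r})|$ remainder, the decay lemma must carry the extra hypothesis $\Tr[B(\W;\B_{x,r})]\geq r^{n-\frac12}$ and conclude $\Tr[B(\W;\B_{x,\tau r})]\leq\tau^{n-\frac12}\Tr[B(\W;\B_{x,r})]$; the iteration to the density bound \eqref{Ahlfors} then proceeds through a dichotomy on whether this threshold is met. Your sketch of the decay lemma and the final density argument is otherwise on target, but it has to be run with $E_0$ and this dichotomy rather than with a trace-type quasi-minimality that is not available.
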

Since the proof is very similar to what was done in \cite{CK16}, we only sketch the specific parts of the proof that concern the vectorial character of our problem.
\begin{proof}
We first establish an optimality conditions for perturbations of $\U$ on balls with small diameter. We suppose $\U$ is normalized and, using the same notations as in \eqref{Diff} for $M=B(\U)$ we denote
\begin{equation}\label{G0}
G_0\Big[N\Big]=F_0\left(\Lambda_1\Big[ N_{|I_1}\Big],\hdots,\Lambda_{k-i_{k}+1}\Big[ N_{|I_p}\Big]\right),
\end{equation}
such that:
\begin{equation}\label{Dev}
G\Big[B(\U)+N\Big]=G\Big[B(\U)\Big]+G_0\Big[N\Big]+\underset{N\to 0}{o}(N).\end{equation}

While $G_0$ is not linear (except in the particular case where $\frac{\partial F}{\partial\lambda_i}=\frac{\partial F}{\partial \lambda_j}$ for each $i,j$ such that $\lambda_i(\U;\beta)=\lambda_j(\U;\beta)$), it is positively homogeneous. We let
\[E_0\Big[N\Big]=\max\left(G_0\Big[N\Big],\Tr\Big[N\Big]\right).\]
$E_0$ is also positively homogeneous and verify that for any non-zero $S\in S_k^+(\R)$, $E_0\Big[-S\Big]<0$. We show that:
\begin{center}\textit{For any $\V\in\Uk$ that differs from $\U$ on a ball $\B_{x,r}$ where $r$ is small enough, we have}\end{center}
\begin{equation}\label{EstOpt}
E_0\Big[B(\V;\B_{x,r})-B(\U;\B_{x,r})\Big]\geq -\Lambda r^n -\delta(r)|B(\U;\B_{x,r})|.
\end{equation}
Where $\Lambda>0$, $\delta(r)\underset{r\to 0}{\rightarrow}0$, and
\[B(\W;\B_{x,r})_{i,j}:=\int_{\B_{x,r}}\nabla w_i\cdot\nabla w_j \mathrm{d}\Ln+\beta\int_{J_\W}\left(\overline{w_i 1_{\B_{x,r}}}\cdot\overline{w_j 1_{\B_{x,r}}}+\underline{w_i 1_{\B_{x,r}}}\cdot\underline{w_j 1_{\B_{x,r}}}\right)\mathrm{d}\Hs.\]
To show \eqref{EstOpt}, we may suppose that $\Tr\Big[B(\V;\B_{x,r})\Big]\leq \Tr\Big[B(\U;\B_{x,r})\Big]$ (or else it is automatically true) and that $\V$ is bounded in $L^\infty$ by the same bound as $\U$. The optimality condition of $\U$ gives
\[\F_\gamma(\U)\leq \F_\gamma(\V),\]
where the right-hand side is well defined for any small enough $r>0$ since $|A(\V)-I_k|\leq Cr^n$. This implies
\[G\Big[B(\U)\Big]\leq G\Big[(1+Cr^n)(B(\U)-B(\U;\B_{x,r})+B(\V;\B_{x,r}))\Big]+\gamma|\B_{x,r}|.\]
Thus, using the monotonicity of $G$ and the developpement \eqref{Dev} we obtain the estimate \eqref{EstOpt}. Let us now show that this estimate, along with the a priori estimate
\begin{equation}\label{Estapriori}
\delta 1_{\left\{\U\neq 0\right\}}\leq |\U|\leq M,
\end{equation}
implies the closedness of $J_\U$, following arguments of \cite{CK16} that were originally developped in \cite{DCL89} for minimizers of the Mumford-Shah functional. The crucial argument is the following decay lemma.

\begin{lemma}\label{DecayLemma}
For any small enough $\tau \in ]0,1[$, there exists $\overline{r}=\overline{r}(\tau),\eps=\eps(\tau)>0$, such that for any $x\in \Rn$, $r\in ]0,\overline{r}]$, $\W\in \Uk$ verifying the a priori estimates \eqref{Estapriori} and the optimality condition \eqref{EstOpt}
\[\left(\Hs(J_\W\cap \B_{x,r})\leq \eps r^{n-1},\ \Tr\Big[B(\W;\B_{x,r})\Big]\geq r^{n-\frac{1}{2}}\right)\text{ implies }Tr\Big[B(\W;\B_{x,\tau r})\Big]\leq \tau^{n-\frac{1}{2}}Tr\Big[B(\W;\B_{x,r})\Big].\]
\end{lemma}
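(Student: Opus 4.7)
My plan is to argue by contradiction with a blow-up procedure, in the spirit of the Mumford-Shah regularity theory of \cite{DCL89}, adapted to the scalar Robin setting in \cite{CK16} and to the present vectorial one. Fix a small $\tau\in (0,1)$, whose precise smallness will be specified at the end. Assuming the conclusion fails, I extract sequences $r_i\to 0$, $\epsilon_i\to 0$, points $x_i\in\Rn$ and admissible $\W^i$ satisfying \eqref{Estapriori} and \eqref{EstOpt}, such that $\Hs(J_{\W^i}\cap \B_{x_i,r_i})\le \epsilon_i r_i^{n-1}$, $E_i:=\Tr\Big[B(\W^i;\B_{x_i,r_i})\Big]\ge r_i^{n-1/2}$, yet $\Tr\Big[B(\W^i;\B_{x_i,\tau r_i})\Big]>\tau^{n-1/2}E_i$.

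First I would rescale via $\V^i(y):=r_i^{(n-2)/2}E_i^{-1/2}\W^i(x_i+r_i y)$ on $\B_{0,1}$, which normalizes the Dirichlet part so that $\int_{\B_{0,1}}|\nabla\V^i|^2\le 1$ and $\Hs(J_{\V^i}\cap\B_{0,1})\le \epsilon_i\to 0$. The $L^\infty$ bound $|\W^i|\le M$ provided by \eqref{Estapriori} together with the gap $r_i^{n-1/2}\gg r_i^{n-1}$ between the energy floor and the jump measure lets me make the rescaled Robin contribution and the remainder $\Lambda r_i^n+\delta(r_i)|B(\W^i;\B_{x_i,r_i})|$ from \eqref{EstOpt} negligible in the limit, provided I impose a suitable quantitative smallness relation between the parameters $\bar r(\tau)$ and $\epsilon(\tau)$ produced by the lemma.

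Next, after subtracting a vector of constants in order to apply a Poincaré–Wirtinger inequality for $\SBV$ functions with a small jump set, Proposition \ref{CompactnessLemma} yields a subsequence converging in $L^2_{\text{loc}}(\B_{0,1})$ to some limit $\V$. Since $\Hs(J_{\V^i})\to 0$ and the jump measure is lower semicontinuous, I have $\V\in H^1(\B_{0,1};\R^k)$. To identify $\V$ as harmonic, I would feed smooth compactly supported test perturbations into the rescaled form of \eqref{EstOpt} and pass to the limit: the main nuance is that $E_0$ is only positively homogeneous and its shape depends on the eigenvalue multiplicities of $B(\U)$, so I would decompose the perturbation along the eigenspace clusters $I_l$ appearing in \eqref{G0} and, using $E_0\Big[-S\Big]<0$ for nonzero $S\in S_k^+(\R)$ together with positive homogeneity on each cluster, conclude that every component $v_j$ of $\V$ is harmonic on $\B_{0,1}$.

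Finally, a standard mean-value estimate for harmonic maps gives $\int_{\B_{0,\tau}}|\nabla\V|^2\le C_n\tau^n\int_{\B_{0,1}}|\nabla\V|^2\le C_n\tau^n$. Conversely, the contradiction assumption (with the negligible Robin correction subtracted) rewrites as $\int_{\B_{0,\tau}}|\nabla\V^i|^2\ge \tau^{n-1/2}-o(1)$, and strong $L^2_{\text{loc}}$ convergence of $\nabla\V^i$ (a consequence of harmonicity of $\V$ and elliptic regularity) upgrades this to $\int_{\B_{0,\tau}}|\nabla\V|^2\ge \tau^{n-1/2}$. Choosing $\tau<C_n^{-2}$ at the outset makes the two estimates incompatible, closing the contradiction. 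The hardest step is the limit passage in the matrix-valued optimality condition, which is precisely where the vectorial character of the problem enters and which constitutes the extra layer absent from the scalar analysis of \cite{CK16}.
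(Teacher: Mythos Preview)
Your overall architecture (contradiction, blow-up, harmonic limit, decay of harmonic energy) is exactly the paper's, and the rescaling and compactness steps are fine. Two points in your sketch, however, do not work as written and are precisely where the paper's argument differs.

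First, the harmonicity of the limit. You propose to feed ``smooth compactly supported test perturbations'' into the rescaled \eqref{EstOpt} and then ``decompose along the eigenspace clusters $I_l$''. The difficulty is that $E_0$ is only positively homogeneous, not linear, so a first-variation argument with arbitrary test functions does not yield an Euler--Lagrange equation; and a generic perturbation produces a matrix $B(\V+\boldsymbol{\varphi};\B_\rho)-B(\V;\B_\rho)$ with no sign, so the property $E_0\big[-S\big]<0$ for $0\neq S\in S_k^+(\R)$ cannot be invoked. The paper sidesteps this by choosing \emph{one} specific competitor: $\boldsymbol{\varphi}$ equal to the harmonic extension of $\V_{|\partial\B_\rho}$ inside $\B_\rho$. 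Since the harmonic extension minimizes the Dirichlet energy componentwise with fixed boundary data, one has $B(\boldsymbol{\varphi};\B_{\rho'})\le B(\V;\B_{\rho'})$ as symmetric matrices, with equality iff $\V$ is already harmonic. The limiting optimality condition $E_0\big[B(\boldsymbol{\varphi};\B_{\rho'})-B(\V;\B_{\rho'})\big]\ge 0$ then forces equality. No cluster decomposition is needed.

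Second, your justification for strong $L^2_{\mathrm{loc}}$ convergence of $\nabla\V^i$ (``a consequence of harmonicity of $\V$ and elliptic regularity'') is not valid: regularity of the limit says nothing about the approximating gradients. In the De Giorgi--Carriero--Leaci scheme (and in \cite{CK16}), strong gradient convergence is obtained from the same energy comparison: plugging the glued competitor built from the harmonic extension into the rescaled \eqref{EstOpt} yields $\limsup_i\int_{\B_\rho}|\nabla\V^i|^2\le \int_{\B_\rho}|\nabla\V|^2$, which together with weak lower semicontinuity gives strong convergence and hence $\int_{\B_\tau}|\nabla\V|^2\ge \tau^{n-1/2}$, closing the contradiction.
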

\begin{proof}
The proof is sketched following the same steps as \cite{CK16}. Consider a sequence of functions $\W^i\in \Uk$ with a sequence $r_i,\eps_i\to 0$ and a certain $\tau\in ]0,1[$ that will be fixed later, such that:
\begin{align}
\Hs(J_{\W^i}\cap \B_{r_i})&=\eps_i r_i^{n-1},\\
\Tr\Big[B(\W^i;\B_{r_i})\Big]&\geq r_i^{n-\frac{1}{2}},\\ \label{Absurd}
Tr\Big[B(\W^i;\B_{x,\tau r_i})\Big]&\geq \tau^{n-\frac{1}{2}}Tr\Big[B(\W;\B_{r_i})\Big].
\end{align}
And let
\[\V^i(x)=\frac{\W^i\left(x/r_i\right)}{\sqrt{r_i^{2-n}\Tr\Big[B(\W^i;\B_{r_i})\Big]}}.\]
Then, since $\int_{\B_1}|\nabla\V^i|^2\mathrm{d}\Ln\leq 1$ and $\Hs(J_{\V^i}\cap\B_1)=\eps_i\to 0$, we know there exists some sequences $\tau_i^-<m_i<\tau_i^+$ such that the function:
$\tilde{\V^i}:=\min(\max(\V^i,\tau_i^-),\tau_i^+)$ (where the $\min$ and $\max$ are taken for each component) verifies:
\begin{align*}
\Vert \tilde{\V^i}-m_i\Vert_{L^\frac{2n}{n-2}(\B_1)}&\leq C_n\Vert \nabla\V\Vert_{L^2(\B_1)}&\left(\leq 1\right),\\
\Ln(\{\tilde{\V^i}\neq \V^i\})&\leq C_n\Hs(J_{\V^i}\cap \B_1)^\frac{n}{n-1}&\left(=C_n\eps_i^\frac{n}{n-1}\right).
\end{align*}
One may prove (using a $\mathrm{BV}$ and a $L^{\frac{2n}{n-2}}$ bound) that $\tilde{\V}^i-m_i$ converges in $L^2$ with lower semi-continuity for the Dirichlet energy to some $\V\in H^1(\B_1)$. We claim $\V$ is harmonic as a consequence of \eqref{EstOpt}: for this consider a function $\boldsymbol{\varphi}\in H^1(\B_1)^k$ that coincides with $\V$ outside a ball $\B_\rho$ for some $\rho<1$. Let $\rho'\in ]\rho,1[$, $\eta\in \mathcal{C}^\infty_{\text{compact}}(\B_{\rho'},[0,1])$ such that $\eta=1$ on $\B_\rho$ and $|\nabla\eta|\leq 2(\rho'-\rho)^{-1}$. Then we define
\begin{align*}
\boldsymbol{\varphi}^i&= (m_i+\boldsymbol{\varphi})\eta+\tilde{\V}^i(1-\eta)1_{\B_{\rho'}}+\V^i 1_{\Rn\setminus\B_{\rho'}},\\
\boldsymbol{\Phi}^i(x)&=\sqrt{r_i^{2-n}\Tr\Big[B(\W^i;\B_{r_i})\Big]}\boldsymbol{\varphi}^i(r_ix).
\end{align*}
$\boldsymbol{\Phi}^i$ coincides with $\W^i$ outside of a ball of radius $\rho' r_i$, so it may be compared to $\W^i$ using the optimality condition \eqref{EstOpt}. With the same computations as in \cite{CK16} we obtain, as $\rho\nearrow \rho'$, that
\[E_0\Big[B(\boldsymbol{\varphi};\B_{\rho'})-B(\V;\B_{\rho'})\Big]\geq 0.\]
Taking $\boldsymbol{\varphi}$ to be the harmonic extension of $\V_{|\partial \B_\rho}$ in $\B_\rho$, we find that $B(\boldsymbol{\varphi};\B_{\rho'})\leq B(\V;\B_{\rho'})$ with equality if and only if $\V$ is equal to its harmonic extension. If it is not, then \[E_0\Big[B(\boldsymbol{\varphi};\B_{\rho'})-B(\V;\B_{\rho'})\Big]< 0,\]
which contradicts the optimality. This means that the components of $\V$ are harmonic. Since $\int_{\B_1}|\nabla \V|^2\mathrm{d}\Ln\leq 1$, then $|\nabla \V|\leq \sqrt{1/|\B_{1/2}|}$ on $\B_{1/2}$, so for any $\tau< \frac{1}{2^n|\B_1|}$ we find that $\int_{\B_\tau}|\nabla u|^2\mathrm{d}\Ln<\tau^{n-\frac{1}{2}}$; this contradicts the condition \eqref{Absurd}.
\end{proof}
The decay lemma implies the existence of $r_1,\eps_1>0$ such that for any $x\in J_\U^{\text{reg}}$ and $r\in ]0,r_1[$:
\begin{equation}\label{Ahlfors}
\Hs(J_\U\cap \B_{x,r})\geq\eps_1 r^{n-1}.
\end{equation}
Suppose indeed that it is not the case for some $x\in J_{\U}$. Let $\tau_0\in ]0,1[$ be small enough to apply lemma \ref{DecayLemma}. Then for a small enough $\tau_1$,
\[\Tr\Big[B(\U:\B_{x,\tau_1 r}\Big]\leq \delta^2\eps(\tau_0) (\tau_1r)^{n-1}.\]
Indeed, either $\Tr\Big[B(\U;\B_{x, r})\Big]$ is less than $r^{n-\frac{1}{2}}$ and this is direct provided we take $r_1<\delta^4\eps(\tau_0)^2\tau_1^{2(n-1)}$, or it is not and then by application of the lemma (and using the fact that $\Tr\Big[B(\U;\B_{x,r})\Big]\leq C(\U)r^{n-1}$, which is obtained by comparing $\U$ with $\U1_{\Rn\setminus \B_{x,r}}$) we get
\[\Tr\Big[B(\U;\B_{x,\tau_1r})\Big]\leq C(\U)\tau_1^{n-\frac{1}{2}}r^{n-1}\leq \delta^2\eps(\tau_0)(\tau_1 r)^{n-1},\]
provided we choose $\tau_1\leq C(\U)^{-2}\delta^4 \eps(\tau_0)^2$ (and $\eps_1=\eps(\tau_1),r_1<\overline{r}(\tau_1)$ so that the lemma may be applied). Then we may show by induction that for all $k\in\mathbb{N}$,
\begin{equation}\label{Induction}
\Tr\Big[B(\U;\B_{x,\tau_0^{k}\tau_1r})\Big]\leq \delta^2\eps(\tau_0)\tau_0^{k(n-\frac{1}{2})}(\tau_1r)^{n-1}.\end{equation}

Indeed \eqref{Induction} implies that $\Hs(J_{\U}\cap \B_{\tau_0^k\tau_1 r})\leq \eps(\tau_0)(\tau_0^k\tau_1r)^{n-1}$, so with the same dichotomy as above we may apply the lemma \ref{DecayLemma} again to obtain \eqref{Induction} by induction.\bigbreak

Overall this means that $\frac{1}{\rho^{n-1}}\left(\int_{\B_{x,\rho}}|\nabla\U|^2\mathrm{d}\Ln +\Hs(J_\U\cap\B_{x,r})\right)\underset{\rho\to 0}{\rightarrow}0$, which is not the case when $x\in J_\U$ (see \cite{DCL89}, Theorem 3.6), so \eqref{Ahlfors} holds. By definition it also holds for $x\in \overline{J_{\U}}$ with a smaller constant, however according to \cite{DCL89}, lemma 2.6, $\Hs$-almost every $x$ such that $\liminf_{r\to 0}\frac{\Hs(J_{\U}\cap\B_{x,r})}{r^{n-1}}>0$ is in $J_{\U}$, which ends the proof.
\end{proof}

As a consequence of the existence of a relaxed minimizer and the regularity of relaxed minimizers, we obtain the theorem \ref{main1}.

\begin{proof}
We know from the proposition \ref{existence} that there exists a relaxed minimizer $\U$ of $\F$ in $\Uk(m)$, and from lemma \ref{PenalizationLemma} that $\U$ is an internal relaxed minimizer of $\F_\gamma$ for some $\gamma>0$ that only depends on the parameters. From the proposition \ref{apriori} we obtain that for certain constants $\delta,M,R>0$ only depending on the parameters, $\delta 1_{\{\U\neq 0\}}\leq |\U|\leq M$ and the diameter of the support of $\U$ (up to translation of its components) is less than $R$. From proposition \ref{regularity} we know that $\Hs(\overline{J_\U}\setminus J_\U)=0$. Since $|\U|\geq \delta 1_{\{\U\neq 0\}}$, we obtain
\begin{align*}
\Hs(\overline{J_\U})&=\Hs(J_\U)\leq \delta^{-2}\int_{J_{\U}}\left(|\overline{\U}|^2+|\underline{\U}|^2\right)\mathrm{d}\Hs\\
&\leq \beta^{-1}\delta^{-2}\left(\lambda_1(\U;\beta)+\hdots+\lambda_k(\U;\beta)\right)\leq C(n,m,\beta,F).
\end{align*}
Let $\Om$ be the union of the connected components of $\R^n\setminus \overline{J_\U}$ on which $\U$ is not zero almost everywhere. By definition $\partial\Om=\overline{J_{\U}}$, and $\U$ is continuous on $\Rn\setminus J_\U$ and do not take the values $\pm\frac{\delta}{2}$, thus $|\U|\geq \delta$ on $\Om$. In particular, $\{\U\neq 0\}$ and $\Om$ differ by a $\Ln$-negligible set, and $J_\U\subset\partial\Om$, so $\U_{|\Om}\in H^1(\Om)^k$. This means that for every $i=1,\hdots,k$, $\lambda_i(\Om;\beta)\leq \lambda_i(\U;\beta)$, so $\Om$ is optimal for $\F$.\bigbreak

In the proof of proposition \ref{regularity} we obtained the existence of a certain $\eps_1,r_1>0$ such that for every $x\in\partial\Om(=\overline{J_\U})$, $r<r_1$, then $\Hs(\B_{x,r}\cap\partial\Om)\geq \eps_1r^{n-1}$. By comparing $\U$ with $\U1_{\Rn\setminus B_{x,r}}$ (similarly to what was done in the proof of the proposition \ref{apriori}), we obtain the upper bound $\Hs(\B_{x,r}\cap\partial\Om)\leq Cr^{n-1}$; this concludes the proof.
\end{proof}

\section{The functional $\Om\mapsto\lambda_k(\Om;\beta)$}

We are now interested by the specific functional
\[\Om\mapsto \lambda_k(\Om;\beta).\]
While it is not covered by the previous existence result, relaxed minimizers of this functional were shown to exist in \cite{BG19}. To understand its regularity, it might be tempting to consider a sequence of relaxed minimizers with the function $F(\lambda_1,\hdots,\lambda_k)=\lambda_k+\eps (\lambda_1+\hdots+\lambda_{k-1})$ where $\eps\to 0$, however while the $L^\infty$ bound does not depend on $\eps$, the lower bound does and it seems to degenerate to 0 as $\eps$ goes to 0.\\
This prevents us to obtain any regularity on relaxed minimizers of this functional. We are, however, able to treat the specific case where the $k$-th eigenvalue would be simple, and this analysis allows us to prove that this does not happen in general. In particular, we shall prove $\lambda_k(\U;\beta)=\lambda_{k-1}(\U;\beta)$.\\

\subsection{Regularization and perturbation lemma}

We begin with a density result that allows us to suppose without loss of generality that $\U$ is bounded in $L^\infty$. This relies on the same procedure as \cite[Theorem 4.3]{BG19}.\\
We remind the notation for admissible functions used previously:
\[\Uk(m)=\left\{ \V\in\Uk:\ \V\text{ is linearly independant and }|\{\V\neq 0\}|=m\right\},\]
as well as the fact that if $\U$ is a relaxed minimizer of $\lambda_k(\cdot;\beta)$ in $\Uk(m)$ then according to lemma \ref{PenalizationLemma} there is a constant $\gamma>0$ such that $\U$ is a minimizer of
\[\V\mapsto \lambda_k(\V;\beta)+\gamma|\{\V\neq 0\}|\]
for linearly independant function $\V$ such that $|\{\V\neq 0\}|\in ]0,m]$.
\begin{lemma}\label{LemmaApriori}
Let $\U=(u_1,\hdots,u_k)$ be a relaxed minimizer of $\lambda_k(\U;\beta)$ in $\Uk(m)$. Suppose that $\lambda_k(\U;\beta)>\lambda_{k-1}(\U;\beta)$. Then there exists another minimizer $\V\in \Uk(m)$ that is linearly independant, normalized, such that $v_1\geq 0$, $\V\in L^\infty(\Rn)$, and
\[\lambda_{k-1}(\V;\beta)<\lambda_{k}(\V;\beta).\]
\end{lemma}
This justifies that in all the following propositions we may suppose that $\U\in L^\infty(\Rn)$ without loss of generality.
\begin{proof}
Without loss of generality suppose that $\U$ is normalized. Then according to \cite{BG19}, which itself relies on the Cortesani-Toader regularization (see \cite{CT99}), there exists a sequence of bounded polyhedral domains $(\Om^p)$ along with a sequence $\U^p\in \Uk\cap H^1(\Om^p)^k$ such that $\U^p\underset{p\to\infty}{\rightarrow}\U$ in $L^2$, and
\[\limsup_{p\to\infty}B(\U^p)\leq B(\U),\ \limsup_{p\to\infty}|\Om^p|\leq |\left\{\U\neq 0\right\}|.\]
Let $\V^p=(v_1^p,\hdots,v_k^p)$ be the first $k$ eigenfunctions of $\Om^p$ (with an arbitrary choice in case of multiplicity; notice $v_1^p$ may be chosen positive), then $B(\V^p)\leq B(\U^p)$ and with Moser iteration $\V^p$ is bounded in $L^\infty$ by $C_{n,\beta,m}\lambda_k(\U;\beta)^\frac{n}{2}$ (which, in particular, does not depend on $p$). Using the compactness result \ref{CompactnessLemma}, we find that up to an extraction $\V^p$ converges in $L^2$ and almost everywhere to $\V\in \Uk$ with lower semi-continuity on its Dirichlet-Robin energy, thus $\V$ is a minimizer in $L^\infty$ with $v_1\geq 0$. Moreover,
\[\lambda_{k-1}(\V;\beta)\leq \liminf_{p\to\infty}\lambda_{k-1}(\V^p;\beta)\leq \liminf_{p\to\infty}\lambda_{k-1}(\U^p;\beta)\leq \lambda_{k-1}(\U;\beta)<\lambda_{k}(\U;\beta)\leq \lambda_k(\V;\beta).\]
\end{proof}

\begin{lemma}\label{LemmaPerturb}
Let $\U=(u_1,\hdots,u_k)\in \Uk(m)\cap L^\infty(\Rn)$ be an internal relaxed minimizer of $\lambda_k(\U;\beta)$ in $\Uk(m)$, that we suppose to be normalized. Suppose that $\lambda_k(\U;\beta)=\lambda_{k-l+1}(\U;\beta)>\lambda_{k-l}(\U;\beta)$. Then there exists $\delta,\gamma>0$ such that, for all $\om\subset \mathbb{R}^n$ that verify
\[|\om|+\Per(\omega;\Rn\setminus J_\U)<\delta,\]
there exists $\alpha\in (\left\{ 0\right\}^{k-l}\times \mathbb{R}^l)\cap\mathbb{S}^{k-1}$ such that
\begin{equation}\label{EstPerturb}
\int_{\om}|\nabla u_\alpha|^2\mathrm{d}\Ln +\beta\int_{J_\U}\left(\overline{u_\alpha1_{\om}}^2+\underline{u_\alpha1_{\om}}^2\right)\mathrm{d}\Hs+\gamma|\om|\leq 2\beta \int_{\partial^*\om\setminus J_\U}u_\alpha^2\mathrm{d}\Hs+2\lambda_k(\U;\beta)\int_{\om}u_\alpha^2\mathrm{d}\Ln.\end{equation}
\end{lemma}

As may be seen in the proof, the factors $2$ on the right-hand side may be replaced by $1+\underset{\delta\to 0}{o}(1)$, however this will not be useful for us.\\
This result will only be applied in the particular case where $l=1$: when $l>1$ it gives a very weak information on the eigenspace of $\lambda_k(\U;\beta)$ and it would be interesting to see if the regularity of one of the eigenfunctions might be deduced from it as was done in \cite{BMPV15} (in the same problem with Dirichlet boundary conditions). In this case better estimates were obtained by perturbing the functional into $(1-\eps)\lambda_{k}+\eps\lambda_{k-1}$, considering a minimizer $\Om^\eps$ that contains the minimizer $\Om$ of $\lambda_k$, and separating the cases where $\lambda_k(\Om^\eps)$ is simple or not. However these arguments use crucially the monotonicity and scaling properties of $\lambda_i$, which are not available for Robin boundary conditions.

\begin{proof}
Let us denote $\V=\U 1_{\mathbb{R}^n\setminus\omega}$, $A,B=A(\V),B(\V)$, and for any $\alpha,\beta\in\R^k$,
\begin{align*}
A_{\alpha,\beta}&=\sum_{i=1}^k \alpha_i \beta_i A_{i,j},\\
B_{\alpha,\beta}&=\sum_{i=1}^k \alpha_i \beta_i B_{i,j}.\\
\end{align*}
We study the quantity
\[\lambda_k(\V;\beta)=\max_{\alpha\in\mathbb{S}^{k-1}}\frac{B_{\alpha,\alpha}}{A_{\alpha,\alpha}}.\]
Due to the $L^\infty$ bound on $\U$ and the fact that $|\om|+\Per(\om;\Rn\setminus J_\U)\leq \delta$:
\begin{align*}
\inf_{\alpha\in \left\{0\right\}^{k-l}\times \R^{l}\cap\mathbb{S}^{k-1}}\frac{B_{\alpha,\alpha}}{A_{\alpha,\alpha}}&\underset{\delta\to 0}{\longrightarrow}\lambda_{k}(\U;\beta),\\
\sup_{\eta\in\R^{k-l}\times \left\{0\right\}^{l}\cap\mathbb{S}^{k-2}}\frac{B_{\eta,\eta}}{A_{\eta,\eta}}&\underset{\delta\to 0}{\longrightarrow}\lambda_{k-l}(\U;\beta)(<\lambda_{k}(\U;\beta))
\end{align*}

Thus for a small enough $\delta$ the maximum above is attained for a certain $\frac{\alpha+t\eta}{\sqrt{1+t^2}}$ where $\alpha\in \left\{0\right\}^{k-l}\times \R^{l}\cap\mathbb{S}^{k-1}$, $\eta\in\R^{k-l}\times \left\{0\right\}^{l}\cap\mathbb{S}^{k-1}$ and $t\in\R$. $\alpha$ and $\eta$ are fixed in what follows and so
\[\lambda_k(\V;\beta)=\max_{t\in\mathbb{R}}\frac{B_{\alpha,\alpha}+2tB_{\alpha,\eta}+t^2B_{\eta,\eta}}{A_{\alpha,\alpha}+2tA_{\alpha,\eta}+t^2A_{\eta,\eta}}.\]
We let
\begin{align*}
b_{\alpha,\eta}&=\frac{B_{\alpha,\eta}}{B_{\alpha,\alpha}},\ &b_{\eta,\eta}=\frac{B_{\eta,\eta}}{B_{\alpha,\alpha}},\\
a_{\alpha,\eta}&=\frac{A_{\alpha,\eta}}{A_{\alpha,\alpha}},\ &a_{\eta,\eta}=\frac{A_{\eta,\eta}}{A_{\alpha,\alpha}},\\
\end{align*}
\[F(t)=\frac{1+2tb_{\alpha,\eta}+t^2b_{\eta,\eta}}{1+2ta_{\alpha,\eta}+t^2a_{\eta,\eta}}.\]
Then we may rewrite
\begin{equation}\label{eqlambdak}
\lambda_k(\V;\beta)=\frac{B_{\alpha,\alpha}}{A_{\alpha,\alpha}}\max_{t\in\mathbb{R}}F(t).\end{equation}
Moreover,
\[a_{\eta,\eta}\underset{\delta\to 0}{\longrightarrow}1,\ \limsup_{\delta\to 0}b_{\eta,\eta}\leq \frac{\lambda_{k-l}(\U;\beta)}{\lambda_{k}(\U;\beta)}<1.\]

We look for the critical points of $F$; $F'(t)$ has the same sign as
\[(a_{\alpha,\eta} b_{\eta,\eta}-a_{\eta,\eta} b_{\alpha,\eta})t^2-(a_{\eta,\eta}-b_{\eta,\eta})t+(b_{\alpha,\eta}-a_{\alpha,\eta}).\]
Since $F$ has the same limit in $\pm\infty$, this polynomial has two real roots given by:
\[t^{\pm}=\frac{a_{\eta,\eta}-b_{\eta,\eta}}{2(a_{\alpha,\eta}b_{\eta,\eta} - a_{\eta,\eta} b_{\alpha,\eta})}\left(1\pm\sqrt{1-4\frac{(b_{\alpha,\eta}-a_{\alpha,\eta})(a_{\alpha,\eta}b_{\eta,\eta} - a_{\eta,\eta} b_{\alpha,\eta})}{(a_{\eta,\eta}-b_{\eta,\eta})^2}}\right).\]
Since $F'$ has the same sign as $(a_{\alpha,\eta} b_{\eta,\eta}-a_{\eta,\eta} b_{\alpha,\eta})$ in $\pm\infty$, we find that the maximum of $F$ is attained in $t^{-}$. For any small enough $\delta$ we obtain
\[|t^{-}|\leq C_1|a_{\alpha,\eta}-b_{\alpha,\eta}|,\]
where $C_1$ only depends on $\lambda_k(\U;\beta),\lambda_{k-1}(\U;\beta)$. We evaluate $F$ in $t^-$ to obtain, for small enough $\delta$,
\[F(t^-)\leq 1+C_2(A_{\alpha,\eta}^2+B_{\alpha,\eta}^2),\]
where $C_2$ is another such constant. With the Cauchy Schwarz inequality we obtain
\begin{align*}
A_{\alpha,\eta}^2&=\underset{\delta\to 0}{o}\left(\int_{\om}u_\alpha^2\mathrm{d}\Ln\right),\\
B_{\alpha,\eta}^2&=\underset{\delta\to 0}{o}\left(\int_{\om}|\nabla u_\alpha|^2+\beta \int_{J_\U\cup \partial^*\om}\left(\underline{u_\alpha1_{\om}}^2+\overline{u_\alpha1_{\om}}^2\right)\mathrm{d}\Hs\right).
\end{align*}

Moreover,
\begin{align*}
B_{\alpha,\alpha}&=B(\U)_{\alpha,\alpha}-\int_{\om}|\nabla u_\alpha|^2\mathrm{d}\Ln-\beta\int_{J_\U}\left(\underline{u_\alpha1_{\om}}^2+\overline{u_\alpha1_{\om}}^2\right)\mathrm{d}\Hs+\int_{\partial^*\om\setminus J_\U}u_\alpha^2 \mathrm{d}\Hs,\\
A_{\alpha,\alpha}&=1-\int_{\om}u_\alpha^2\mathrm{d}\Ln.\end{align*}

Thus for a small enough $\delta$, we obtained the following estimate in \eqref{eqlambdak}
\begin{align*}
\left(1-\int_{\om}u_\alpha^2\mathrm{d}\Ln\right)\lambda_k(\V;\beta)&\leq B(\U)_{\alpha,\alpha}-(1-\underset{\delta\to 0}{o}(1))\left(\int_{\om}|\nabla u_\alpha|^2\mathrm{d}\Ln+\beta\int_{J_\U}\left(\underline{u_\alpha1_{\om}}^2+\overline{u_\alpha1_{\om}}^2\right)\mathrm{d}\Hs\right)\\
&+(1+\underset{\delta\to 0}{o}(1))\int_{\partial^*\om\setminus J_\U}u_\alpha^2 \mathrm{d}\Hs+\underset{\delta\to 0}{o}\left(\int_{\om}u_\alpha^2\mathrm{d}\Ln\right).\end{align*}
The optimality condition on $\U$ ($\lambda_k(\U;\beta)+\gamma |\om|\leq \lambda_k(\V)$ for a certain $\gamma>0$ that does not depend on $\om$, obtained through Lemma \ref{PenalizationLemma}) coupled with the fact that $\lambda_k(\U)=B(\U)_{\alpha,\alpha}$ gives us the estimate \eqref{EstPerturb} for any small enough $\delta$.
\end{proof}

\subsection{Non-degeneracy lemma and the main result}

\begin{proposition}
Let $\U=(u_1,\hdots,u_k)\in \Uk\cap L^\infty(\Rn)$ an internal relaxed minimizer of $\lambda_k(\U;\beta)+\gamma|\left\{ \U\neq 0\right\}|$. Suppose $n\geq 3$, and that $\lambda_k(\U;\beta)>\lambda_{k-1}(\U;\beta)$. Then there exists $c>0$ such that $|u_k|\geq c 1_{\left\{ u_k\neq 0\right\}}$.
\end{proposition}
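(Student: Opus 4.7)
The plan is to apply Proposition \ref{LemmaPerturb}. Since $\lambda_k(\U;\beta)>\lambda_{k-1}(\U;\beta)$, its hypothesis holds with $l=1$, and the admissible directions $\alpha\in(\{0\}^{k-1}\times\R)\cap\mathbb{S}^{k-1}$ reduce to $\pm e_k$. Consequently $u_\alpha=\pm u_k$ and the estimate \eqref{EstPerturb} specializes to an optimality condition purely on $u_k$:
\[
\int_\om|\nabla u_k|^2\mathrm{d}\Ln+\beta\!\int_{J_\U}\!\left(\overline{u_k1_\om}^2+\underline{u_k1_\om}^2\right)\mathrm{d}\Hs+\gamma|\om|\leq 2\beta\!\int_{\partial^*\om\setminus J_\U}u_k^2\mathrm{d}\Hs+2\lambda_k(\U;\beta)\!\int_\om u_k^2\mathrm{d}\Ln,
\]
valid for every $\om\Subset\R^n$ with $|\om|+\Per(\om;\R^n\setminus J_\U)<\delta$. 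From this point on, $u_k$ is treated as a scalar satisfying an optimality condition structurally identical to the one used in the lower-bound portion of the proof of Proposition \ref{apriori}.

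To produce the pointwise lower bound, I would apply the above inequality with $\om=\om_{x,r,t}:=\B_{x,r}\cap\{|u_k|\leq t\}$. Using the $L^\infty$ bound on $u_k$ (coming from Proposition \ref{LemmaApriori}) together with the coarea formula, for each fixed $t>0$ one can choose $r=r(x,t)$ small enough that $|\om_{x,r,t}|$ and $\Per(\om_{x,r,t};\R^n\setminus J_\U)$ are both less than $\delta/2$. Since $|u_k|\leq t$ on $\om_{x,r,t}$, the right-hand side of the perturbation estimate is bounded by $2\beta t^2\,\Hs(\partial^*\om_{x,r,t}\setminus J_\U)+2\lambda_k(\U;\beta)\,t^2|\om_{x,r,t}|$, and for $t$ small enough that $2\lambda_k(\U;\beta)\,t^2\leq\gamma/2$ the second term absorbs into the left-hand side, leaving
\[
\tfrac{\gamma}{2}|\om_{x,r,t}|+\!\int_{\om_{x,r,t}}|\nabla u_k|^2\mathrm{d}\Ln\leq 4\beta t^2\,\Hs(\partial^*\om_{x,r,t}\setminus J_\U).
\]

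The last step is a Caffarelli-type integration, following verbatim the lower-bound portion of the proof of Proposition \ref{apriori} with $V=|u_k|$ in place of $|\U|$. Setting $f(t):=\int_0^t\tau\,\Hs(\partial^*\{V>\tau\}\setminus J_V)\mathrm{d}\tau$ and combining the local estimate above with the embedding $BV(\R^n)\hookrightarrow L^{n/(n-1)}(\R^n)$ applied to $V^2$, I expect to obtain a differential inequality of the form $f(t)\leq C(tf'(t))^{1+1/(2n)}$ on a right-neighbourhood of zero. Integration then forces $f\equiv 0$ on $(0,c]$ for some $c>0$ depending only on the parameters, which implies $(\nabla V)1_{\{V\leq c\}}=0$ and, reinjected into the perturbation estimate, $|\{0<V\leq c\}|=0$. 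This yields the claimed bound $|u_k|\geq c\,1_{\{u_k\neq 0\}}$.

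The main obstacle is the transition from the localized optimality condition (obtained on balls $\B_{x,r}$ with $r$ chosen small in terms of $t$) to the global isoperimetric-type inequality needed to run the Caffarelli integration: Proposition \ref{LemmaPerturb} cannot be applied directly to the global sublevel set $\{0<|u_k|\leq t\}$, because nothing a priori makes its perimeter in $\R^n\setminus J_\U$ small. One must therefore either assemble the local information by a covering argument, or integrate the ball-by-ball estimate against the coarea slicing so as to produce a global statement for $V=|u_k|$. A subtle but crucial point is that the vector $\alpha$ produced by Proposition \ref{LemmaPerturb} is allowed to depend on the test set $\om$; when $l=1$ it is forced to be $\pm e_k$, which is precisely what enables the reduction of the vectorial problem to a scalar one of Mumford--Shah type to which the techniques of Proposition \ref{apriori} apply.
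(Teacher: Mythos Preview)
Your strategy is correct up to and including the reduction to a scalar inequality on $u_k$, and you have correctly identified the central obstacle. But you have not resolved it, and your interim claim that ``for each fixed $t>0$ one can choose $r=r(x,t)$ small enough that $\Per(\om_{x,r,t};\R^n\setminus J_\U)<\delta/2$'' is unjustified: shrinking $r$ controls $\Hs(\partial\B_{x,r}\cap\{|u_k|\leq t\})$ but says nothing about $\Hs(\partial^*\{|u_k|\leq t\}\cap\B_{x,r})$, since no a priori bound on the global level-set perimeter is available. Your proposed cures (a covering argument, or integrating the ball estimate against coarea slices to recover a global inequality) remain programmatic; neither is carried out, and neither seems to go through in a straightforward way.

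The paper resolves the obstacle differently, by never leaving the local picture. The key choice is $r(t)=\epsilon t^{2/(n-1)}$ for a small $\epsilon$. To check that Proposition~\ref{LemmaPerturb} applies to $\om_t=\B_{r(t)}\cap\{|u_k|\leq t\}$, one argues by dichotomy: either the desired estimate \eqref{EstPerturb} already holds (done), or the level-set part of the perimeter is dominated by the left-hand side, in which case applying Proposition~\ref{LemmaPerturb} to the \emph{full} ball $\B_{r(t)}$ yields $\Hs(\partial^*\{|u_k|\leq t\}\cap\B_{r(t)}\setminus J_\U)\leq C r(t)^{n-1}/t^2=C\epsilon^{n-1}<\delta/2$. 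This is the bootstrap you are missing.

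Once the estimate is available on $\om_t$, the Caffarelli integration must also be modified, because the domain shrinks with $t$: the coarea formula for $f(t)$ now picks up contributions from both the $|u_k|$-slicing and the $r$-slicing, which the paper separates via the partition $\om^{\text{inf}}\cup\om^{\text{sup}}$ according to whether $|u_k(x)|\lessgtr |x/\epsilon|^{(n-1)/2}$. The hypothesis $n\geq 3$ enters precisely here, to keep $r'(t)\sim t^{-(n-3)/(n-1)}$ bounded near $0$; in your sketch the role of $n\geq 3$ never appears. The outcome is the same differential inequality $f(t)\leq C(tf'(t))^{1+1/(2n)}$, hence $f\equiv 0$ on $(0,c]$. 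The conclusion is then \emph{local}---for every $x$, $|u_k|\geq c\,1_{\B_{x,r(c)}\cap\{u_k\neq 0\}}$---and becomes global simply because $x$ is arbitrary; no covering is needed.
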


\begin{proof}
We actually prove that there exists $r,t>0$ such that for any $x\in\Rn$, $|u_k|\geq t 1_{\B_{x,r}\cap\left\{ u_k\neq 0\right\}}$, since this is sufficient to conclude. We suppose $x=0$ to simplify the notations. We cannot proceed as in the proof of result \ref{apriori} because we do not know whether $\Per(\left\{|u_k|>t\right\};\Rn\setminus J_\U)$ is less than a constant $\delta$ or not. The idea is to compare $\U$ with $\U1_{\mathbb{R}^n\setminus \om_t}$ where
\[\om_t=\B_{r(t)}\cap\left\{ |u_k|\leq t\right\},\]
for $t>0$ and $r(t)>0$ chosen sufficiently small such that $\Per(\om_t;\Rn\setminus J_u)$ is sufficiently small.

\begin{lemma}\label{Lemma_Estimate}
Under these circumstances, there exists $t_1>0$ such that for all $t<t_1$,
\begin{equation}\label{Estimate}
\int_{\om_{t}}|\nabla u_k|^2\mathrm{d}\Ln +\beta\int_{J_\U}\left(\overline{u_k1_{\om_{t}}}^2+\underline{u_k1_{\om_{t}}}^2\right)\mathrm{d}\Hs+\frac{1}{2}\gamma|\om_{t}|\leq 2\beta \int_{\partial^*\om_{t}\setminus J_\U}u_k^2\mathrm{d}\Hs,\end{equation}
where $\om_t=\left\{|u_k|\leq t\right\}\cap \B_{r(t)}$ with $r(t):=\epsilon t^{\frac{2}{n-1}}$ for a small enough $\epsilon>0$.
\end{lemma}

\begin{proof}
As we said previously, this estimate will be obtained by comparing $\U$ and $\U1_{\Rn\setminus \om_t}$ where $\om_t=\B_{r(t)}\cap\left\{ |u_k|\leq t\right\}$. This is direct if we can apply Lemma \ref{LemmaPerturb}, we only need to show the hypothesis
\[\Hs(\partial^*\om_t\setminus J_u)< \delta.\]
Suppose that
\[\beta t^2\Hs(\partial^*\left\{|u_k|\leq t\right\}\cap\B_{r(t)}\setminus J_\U)\leq \int_{\om_{t}}|\nabla u_k|^2\mathrm{d}\Ln +\beta\int_{J_\U}\left(\overline{u_k1_{\om_{t}}}^2+\underline{u_k1_{\om_{t}}}^2\right)\mathrm{d}\Hs+\gamma|\om_{t}|.\]
Indeed if this inequality is false then we obtained the result. Then, comparing $\U$ with $\U1_{\Rn\setminus \B_{r(t)}}$ with the lemma \ref{LemmaPerturb} (which is allowed for any small enough $r>0$) we obtain the estimate
\begin{align*}
\int_{\B_{r(t)}}|\nabla u_k|^2\mathrm{d}\Ln +\beta\int_{J_\U}\left(\overline{u_k1_{\B_{r(t)}}}^2+\underline{u_k1_{\B_{r(t)}}}^2\right)\mathrm{d}\Hs+\frac{1}{2}\gamma|\B_{r(t)}|&\leq 2\beta \int_{\partial \B_{r(t)}\setminus J_\U}u_k^2\mathrm{d}\Hs\\
&\leq C(n,\beta,\Vert u_k\Vert_{L^\infty})r(t)^{n-1}.
\end{align*}
Combining the two previous inequalities,

\begin{align*}\Hs(\partial^*\left\{|u_k|\leq t\right\}\cap\B_{r(t)}\setminus J_\U)&\leq C(n,\beta,\Vert u_k\Vert_{L^\infty})\frac{r(t)^{n-1}}{t^2}\\
&=C(n,\beta,\Vert u_k\Vert_{L^\infty})\epsilon^{n-1}\\
&\leq \frac{1}{2}\delta\text{ for a small enough }\epsilon.\end{align*}
And so Lemma \ref{LemmaPerturb} may be applied, concluding the proof.
\end{proof}

We introduce the sets
\[\om^{\text{sup}}=\left\{ x:|u_k(x)|\geq |x/\eps|^{\frac{n-1}{2}}\right\},\ \om^{\text{inf}}=\left\{ x:|u_k(x)|\leq |x/\eps|^{\frac{n-1}{2}}\right\}\]
and the function
\[f(t)=\int_{\om_{t}}\left(|\nabla u_k|1_{\om^\text{sup}}+1_{\om^{\text{inf}}}\right)|u_k|\mathrm{d}\Ln.\]
From the coarea formula we get
\[f(t)=\int_0^t \left(\int_{\partial^*\left\{|u_k|\leq \tau\right\}\cap\B_{r(\tau)}\setminus J_\U}|u_k|\mathrm{d}\Hs\right)\mathrm{d}\tau+\int_{0}^{r(t)}\left(\int_{\partial\B_r\cap\left\{ |u_k|\leq (r/\eps)^{\frac{n-1}{2}}\right\}\setminus J_\U}|u_k|\mathrm{d}\Hs\right)\mathrm{d}r.\]
So $f$ is absolutely continuous and
\[f'(t)=\int_{\partial^*\{|u_k|\leq t\}\cap\B_{r(t)}\setminus J_\U}|u_k|\mathrm{d}\Hs+\frac{2\epsilon}{n-1}t^{-\frac{n-3}{n-1}}\int_{\partial\B_{r(t)}\cap \left\{ |u_k|\leq t\right\}\setminus J_\U}|u_k|\mathrm{d}\Hs.\]
We use here the fact that $n\geq 3$, so that for all small enough $t$ we get
\[\frac{1}{\epsilon}f'(t)\geq\int_{\partial^*\{|u_k|\leq t\}\cap\B_{r(t)}\setminus J_\U}|u_k|\mathrm{d}\Hs+\int_{\partial\B_{r(t)}\cap \left\{ |u_k|\leq t\right\}\setminus J_\U}|u_k|\mathrm{d}\Hs.\]
We will now estimate $f$ in a similar manner as in result \ref{apriori}.
\begin{align*}
c_n\left(\int_{\om_{t}}|u_k|^{2\frac{n}{n-1}}\mathrm{d}\Ln\right)^{\frac{n-1}{n}}&\leq D(|u_k|^21_{\om_{t}})(\mathbb{R}^n)\\
&=\int_{\om_{t}}2|u_k\nabla u_k|\mathrm{d}\Ln +\int_{J_\U}\left(\overline{u_k1_{\om_{t}}}^2+\underline{u_k1_{\om_{r,t}}}^2\right)\mathrm{d}\Hs\\
&+\int_{\partial^*\om_{t}\setminus J_u}|u_k|^2\mathrm{d}\Hs\\
&\leq |\om_{t}|+\int_{\om_{t}}|\nabla u_k|^2\mathrm{d}\Ln +\int_{J_\U}\left(\overline{u_k1_{\om_{t}}}^2+\underline{u_k1_{\om_{r,t}}}^2\right)\mathrm{d}\Hs\\
&+\int_{\partial^*\om_{t}\setminus J_u}|u_k|^2\mathrm{d}\Hs\\
&\leq C_{\beta,\gamma}\int_{\partial^*\om_{t}\setminus J_u}|u_k|^2\mathrm{d}\Hs\\
&\leq \frac{C_{\beta,\gamma}}{\epsilon}tf'(t).
\end{align*}
We used the lemma \ref{Lemma_Estimate} in the penultimate line, which is only valid for small enough $t$. The hypothesis that $n\geq 3$ was used in the last line. Finally,

\begin{align*}
f(t)&=\int_{\om_{t}}\left(|\nabla u_k|1_{\om^\text{sup}}+1_{\om^{\text{inf}}}\right)|u_k|\mathrm{d}\Ln\\
&\leq |\om_{t}|^{\frac{1}{2n}}\left(\int_{\om_{t}}|\nabla u_k|^2\mathrm{d}\Ln\right)^{\frac{1}{2}}\left(\int_{\om_{t}}|u|^{2\frac{n}{n-1}}\mathrm{d}\Ln\right)^{\frac{n-1}{2n}}+\gamma|\om_{t}|^{\frac{n+1}{2n}}\left(\int_{\om_{t}}|u_k|^{2\frac{n}{n-1}}\mathrm{d}\Ln\right)^{\frac{n-1}{2n}}\\
&\leq C_{n,\beta,\gamma}\left(tf'(t)\right)^{\frac{2n+1}{2n}},
\end{align*}
which implies for a certain $t>0$ that $f(t)=0$. Let $r=\eps t^{\frac{n-1}{2}}$, we show $|u_k|\geq t 1_{\B_{x,r}\cap\left\{ u_k\neq 0\right\}}$. From $f(t)=0$ we get that $u_k=0$ on $\B_r\cap \left\{ x:|u_k(x)|\leq |x/\eps|^{\frac{n-1}{2}}\right\}$. In particular, up to reducing slightly $r$ and $t$ we may suppose
\[\Hs(\partial\B_r\cap \left\{ |u_k|\leq t\right\})=0.\]
Moreover, $f(t)=0$ also gives that $\nabla u_k=0$ on $\B_r\cap\left\{ 0<u\leq t\right\}$. Consider $\U'=\U1_{\mathbb{R}^n\setminus \om}$ where $\om=\B_r\cap\left\{ |u_k|\leq t\right\}$. Note that $J_{\U'}\subset J_{\U}$, and for any small enough $t>0$,
\[\lambda_k(\U;\beta')\leq \lambda_k(\U;\beta)+2t^2|\om|-\frac{1}{2}\beta\int_{J_\U}\left(\overline{u_k1_{\om}}^2+\underline{u_k1_{\om}}^2\right)\mathrm{d}\Hs.\]
This contradicts the minimality of $\lambda_k(\U;\beta)+\gamma|\left\{ \U\neq 0\right\}|$ as soon as $|\om|>0$. This concludes the proof.\bigbreak

\end{proof}
Note that the proof fails when $n=2$ ; we need to choose $r(t)\ll t^{\frac{2}{n-1}}$ to ensure that the competitor $u1_{\Rn\setminus\om_t}$ yields information, but later we use $\inf_{t<1}r'(t)>0$ in a crucial way. When $n=2$ the inequalities are weakened to instead yield $f(t)\geq c t^5$, which is not enough to conclude.\bigbreak
We now deduce the second main result as a consequence.
\begin{proposition}
Suppose $n\geq 3,k\geq 2$. Let $\U=(u_1,\hdots,u_k)$ a relaxed minimizer of $\lambda_k(\U;\beta)$ in $\Uk(m)$. Then
\[\lambda_k(\U;\beta)=\lambda_{k-1}(\U;\beta).\]
\end{proposition}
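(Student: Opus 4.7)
I argue by contradiction: suppose $\lambda_k(\U;\beta)>\lambda_{k-1}(\U;\beta)$. The plan, following the four bullet points in the introduction to this section, is to chain the three preceding propositions to obtain a structured minimizer, use this structure to disconnect its support, and finally improve $\lambda_k$ via dilations of the two resulting pieces.

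By Proposition \ref{LemmaApriori}, I may replace $\U$ with a normalized minimizer $\V=(v_1,\dots,v_k)\in\Uk(m)$ satisfying $v_1\geq 0$, $\V\in L^\infty(\R^n)$, and still $\lambda_{k-1}(\V;\beta)<\lambda_k(\V;\beta)$. By Lemma \ref{PenalizationLemma} there is $\gamma>0$ for which $\V$ is an internal minimizer of $\W\mapsto\lambda_k(\W;\beta)+\gamma|\{\W\neq 0\}|$, so the preceding non-degeneracy proposition (where the assumption $n\geq 3$ enters) supplies a constant $c>0$ with $|v_k|\geq c\,1_{\{v_k\neq 0\}}$.

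Since $A(\V)=I_k$ we have $\int_{\R^n}v_1v_k\,\mathrm{d}\Ln=0$, and because $v_1\geq 0$ is nontrivial this forces either a sign change of $v_k$ on positive-measure sets or essential disjointness of the supports of $v_1$ and $v_k$. In both cases the uniform bound $|v_k|\geq c\,1_{\{v_k\neq 0\}}$ forces $v_k$ to jump by at least $c$ across the reduced boundary separating $\{v_k>0\}$ from $\{v_k\leq 0\}\cap\{\V\neq 0\}$, so this interface sits in $J_{v_k}\subset J_\V$ up to $\Hs$-null sets. The partition $\Omega_+:=\{v_k>0\}$, $\Omega_-:=\{v_k\leq 0\}\cap\{\V\neq 0\}$ therefore satisfies the compatibility condition defining a disconnected function, yielding $\V=\V_+\oplus\V_-$ with $|\Omega_\pm|>0$.

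To close, the separation of the two pieces is used to improve $\lambda_k$ by dilations. For $(r_+,r_-)$ subject to the volume constraint $r_+^n|\Omega_+|+r_-^n|\Omega_-|=m$, define $\V^{(r_+,r_-)}\in\Uk(m)$ by dilating each piece independently and translating them apart so they remain disjoint. Because the supports stay disjoint, the eigenvalues of $\V^{(r_+,r_-)}$ are bounded above by the first $k$ values of the ordered union of the two scaled spectra, each given by $\lambda_i(r\Omega;\beta)=r^{-2}\lambda_i(\Omega;r\beta)$. The strict spectral gap $\lambda_{k-1}(\V;\beta)<\lambda_k(\V;\beta)$ prevents eigenvalue crossings for $(r_+,r_-)$ near $(1,1)$, and a first-order computation along the volume-preserving curve shows that one can always perturb in a direction decreasing this upper bound strictly below $\lambda_k(\V;\beta)$, contradicting the minimality of $\V$. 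The main obstacle is precisely this last dilation step: because the Robin scaling is not a pure monomial in $r$, the directional derivative of $\lambda_k$ mixes a decreasing $-2r^{-3}\lambda_i$ contribution with an increasing $\beta\,\partial_\beta\lambda_i$ correction, and the strict spectral gap is what guarantees that along the one-parameter constraint curve the $k$-th and $(k-1)$-th eigenvalues stay separated, leaving room to identify a genuine direction of decrease.
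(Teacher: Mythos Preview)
Your outline is correct through the non-degeneracy step and the disconnection $\V=\V_+\oplus\V_-$; the gap is in the final dilation argument, and it is a genuine one.

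When $v_k$ changes sign, it is supported on \emph{both} pieces $\Omega_+$ and $\Omega_-$. Writing $N_\pm=\int_{\Omega_\pm}v_k^2$, $G_\pm=\int_{\Omega_\pm}|\nabla v_k|^2$, $J_\pm=\beta\int_{J_\V}\big(\overline{v_k1_{\Omega_\pm}}^2+\underline{v_k1_{\Omega_\pm}}^2\big)$, the simple top eigenvalue has, along the volume-preserving curve at $(r_+,r_-)=(1,1)$, the directional derivative
\[
\big[(n-2)G_++(n-1)J_+-n\lambda_kN_+\big]-\tfrac{|\Omega_+|}{|\Omega_-|}\big[(n-2)G_-+(n-1)J_--n\lambda_kN_-\big].
\]
Nothing forces this to be nonzero; in a configuration symmetric under $\Omega_+\leftrightarrow\Omega_-$ (and $v_k\mapsto -v_k$) it vanishes identically. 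So your ``first-order computation'' does not in general produce a direction of strict decrease, and the spectral gap only guarantees differentiability of $\lambda_k$ along the curve, not non-stationarity. Since $\V$ is assumed to be a minimizer, $t\mapsto\lambda_k(\V^{(r_+(t),r_-(t))};\beta)$ even has a minimum at $t=0$, so first order cannot yield a contradiction here.

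The paper avoids this obstacle by never dilating a decomposition across which the top eigenfunction is split. It proceeds in two passes. First, with $\Omega=\{u_k\neq 0\}$ and $\omega=\{\U\neq 0\}\setminus\Omega$, dilating $\Omega$ up and $\omega$ down strictly decreases the Rayleigh quotient of $u_k$ (which lives entirely on $\Omega$), hence $\lambda_k$, forcing $|\omega|=0$. Second, and this is the key trick you are missing, after splitting $\Omega=\Omega^+\cup\Omega^-$ one passes to the enlarged linearly independent family extracted from $(\U1_{\Omega^+},\U1_{\Omega^-})$; by block-diagonality one can normalize so that each component is supported in a single piece, and by minimality the first $k$ components form a new minimizer $(v_1,\ldots,v_k)$ with the same gap whose $v_k$ is supported in, say, $\Omega^+$ only. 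Reapplying the first pass to this new minimizer gives $\{\,(v_1,\ldots,v_k)\neq 0\,\}\subset\Omega^+$ up to a null set, which contradicts $|\Omega^-|>0$. In short, the contradiction comes not from a direct two-piece dilation but from iterating the one-sided dilation after a change of basis adapted to the decomposition.
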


\begin{proof}
Suppose that $\lambda_k(\U;\beta)>\lambda_{k-1}(\U;\beta)$. We may apply lemma \ref{LemmaApriori} to assume without loss of generality that $u_1\geq 0$ and $\U\in L^\infty$, so that all the previous estimates apply.\\
Let $\Om$ be the support of $u_k$, with $\Om^+=\left\{u_k>0\right\}$ and $\Om^- =\left\{u_k<0\right\}$.\bigbreak

We first notice that $|\left\{\U\neq 0\right\}\setminus\Om|=0$. Suppose indeed that it is not the case, and let $\om=\left\{\U\neq 0\right\}\setminus\Om$. Since $|u_k|\geq \delta 1_{\Om}$, $\U$ may be written as a disconnected sum of two $\Uk$ functions
\[\U=(\U1_\Om)\oplus(\U1_\om).\]
We may translate $\Om$ and $\om$ so that they have a positive distance from each other. Then consider $t>1$ and $s=s(t)<1$ chosen such that
\[|t\Om|+|s\om|=|\Om|+|\om|,\]
and $\U_t$ the function built by dilation of $\U$ on $t\Om\cup s\om$. Then for $t=1+\epsilon$ with a small enough $\epsilon$ we have $\lambda_k(\U_t;\beta)<\lambda_k(\U;\beta)$ with support of same measure, which is absurd by minimality of $\U$. Thus $|\om|=0$.\bigbreak

Since $u_1$ is nonnegative, has support in $\Om$, and $\langle u_1,u_k\rangle_{L^2}=0$, this means that $|\Om^+|,|\Om^-|>0$. We may again decompose $\U$ into
\[\U=(\U1_{\Om^+})\oplus(\U1_{\Om^-}).\]
Consider $\V\in \Up(m)$ for some $p\in \{k,\hdots,2k\}$ an extraction of $(\U1_{\Om^+},\U1_{\Om^-})$, such that it spans the same space in $L^2(\R^n)$ and $\V$ is linearly independant. Then for each $i\in\{1,\hdots,k\}$,
\[\lambda_i(\V;\beta)\leq \lambda_i(\U;\beta),\]
with equality if $i=k$ by optimality of $\U$. Since $A(\V)$ and $B(\V)$ are block diagonals we may suppose $\V$ is normalized such that its components have support in either $\Om^+$ or $\Om^-$: say $v_k$ is supported in $\Om^+$. This means that $\V=(v_1,\hdots,v_k)$ is a minimizer in $L^\infty$ such that $\lambda_k(\V;\beta)>\lambda_{k-1}(\V;\beta)$, and by the previous arguments we know that up to a negligible set $\{\V\neq 0\}\subset\{v_k\neq 0\}$, thus $|\Om^-|=0$: this is a contradiction.

\end{proof}

\subsection{Discussion about the properties of open minimizers}

Here we make a few observations on the properties of minimizing open sets, provided we know such sets exist.

\begin{proposition}
Let $\Om$ be an open minimizer of $\lambda_k(\Om;\beta)$ among opens sets of measure $m$, for $k\geq 2$, with eigenfunction $u_1,\hdots,u_k$. Suppose $\lambda_{k-l}(\Om;\beta)<\lambda_{k-l+1}(\Om;\beta)=\lambda_k(\Om;\beta)$. Then we know that
\[\cap_{i=k-l+1}^k \left(u_i^{-1}(\left\{0\right\})\cap\Om\right)=\emptyset.\]
In particular, for $k=3$ and $n=2$, $\Om$ is not simply connected.
\end{proposition}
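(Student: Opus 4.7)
The strategy is to argue by contradiction against the main claim using Lemma \ref{LemmaPerturb} applied to a small ball around a putative common zero; the corollary then follows by a case analysis on the multiplicity $l$, with the $l=2$ case handled by a complex-variable argument specific to $n = 2$. I begin with the main claim. Suppose $x_0 \in \Om \cap \bigcap_{i=k-l+1}^k u_i^{-1}(\{0\})$. Extending each $u_i$ by zero produces a (normalized) element $\U \in \Uk(m)\cap L^\infty(\R^n)$ that is a relaxed minimizer of $\lambda_k(\cdot;\beta)$ in $\Uk(m)$, with $J_\U \subset \partial\Om$. Since $\Om$ is open and $x_0 \in \Om$, the ball $\om_r := \B(x_0, r)$ is contained in $\Om$ for all small $r$, so $\om_r \cap J_\U = \emptyset$ and the hypothesis $|\om_r|+\Per(\om_r;\R^n\setminus J_\U)<\delta$ of Lemma \ref{LemmaPerturb} holds. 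The lemma yields $\alpha \in (\{0\}^{k-l}\times \R^l)\cap\mathbb{S}^{k-1}$ for which $u_\alpha := \sum_{i=k-l+1}^k \alpha_i u_i$ satisfies \eqref{EstPerturb}.

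Now $u_\alpha$ lies in the $\lambda_k$-eigenspace; interior elliptic regularity gives $u_\alpha \in C^\infty(\Om)$, with a Lipschitz bound near $x_0$ uniform in $\alpha \in \mathbb{S}^{k-1}$. Since $u_\alpha(x_0)=0$, we have $|u_\alpha| \leq Cr$ and $|\nabla u_\alpha| \leq C$ on $\om_r$, which together with $\om_r \cap J_\U = \emptyset$ yields
\begin{align*}
\int_{\om_r}|\nabla u_\alpha|^2\,\mathrm{d}\Ln = O(r^n), &\qquad \int_{J_\U}\bigl(\overline{u_\alpha 1_{\om_r}}^2+\underline{u_\alpha 1_{\om_r}}^2\bigr)\,\mathrm{d}\Hs = 0,\\
\int_{\partial^*\om_r\setminus J_\U} u_\alpha^2\,\mathrm{d}\Hs = O(r^{n+1}), &\qquad \int_{\om_r} u_\alpha^2\,\mathrm{d}\Ln = O(r^{n+2}).
\end{align*}
Plugging these into \eqref{EstPerturb}, the left-hand side is at least $\gamma|\om_r|$, which is of order $r^n$, while the right-hand side is $O(r^{n+1})$: this is absurd for $r$ sufficiently small.

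For the corollary, suppose $n=2$, $k=3$, and $\Om$ simply connected, and let $l$ denote the multiplicity of $\lambda_3$. If $l=1$, orthogonality of $u_3$ to the positive first eigenfunction $u_1$ forces $u_3$ to change sign, so $u_3^{-1}(\{0\})\cap\Om\neq\emptyset$ and the main claim is contradicted. If $l=3$, then $\lambda_1$ would be a triple eigenvalue, but the first Robin eigenvalue is simple on a connected domain, so $\Om$ must be disconnected, hence not simply connected. For $l=2$, let $v_1,v_2$ be a basis of the eigenspace and set $w := v_1 + iv_2$, which satisfies $-\Delta w = \lambda_3 w$ in $\Om$ with $\partial_\nu w+\beta w = 0$ on $\partial\Om$. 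If $v_1$ and $v_2$ had no common zero in $\Om$, simple connectedness would permit writing $w = \rho e^{i\phi}$ with $\rho > 0$ in $\Om$ and $\phi$ single-valued. The imaginary part of the equation then reads $\nabla\cdot(\rho^2\nabla\phi) = 0$, and the Robin condition splits into $\partial_\nu\rho = -\beta\rho$ and $\rho\partial_\nu\phi = 0$, so integrating $\phi\,\nabla\cdot(\rho^2\nabla\phi)$ by parts yields $\int_\Om \rho^2|\nabla\phi|^2\,\mathrm{d}\Ln = 0$. Hence $\phi$ is constant, $v_1$ and $v_2$ are linearly dependent, and we obtain the desired contradiction.

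The most delicate point is handling possible common zeros of $v_1, v_2$ on $\partial\Om$ in the $l=2$ case, where the polar decomposition degenerates; this should be addressed by exhausting $\Om$ by the sub-level sets $\{\rho > \eps\}$ and controlling the boundary contributions as $\eps \to 0^+$, using the fact that for smooth $\partial\Om$ the zeros of the holomorphic-like object $w$ on the boundary must be isolated.
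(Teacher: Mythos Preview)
Your proof of the main claim is essentially identical to the paper's: both apply Lemma \ref{LemmaPerturb} to the ball $\B_{x_0,r}$, use interior $C^1$ regularity of the eigenfunctions (uniform in $\alpha$) to get $|u_\alpha|\leq Cr$ on $\B_{x_0,r}$, and obtain the contradiction $\gamma|\B_{x_0,r}|\leq C r^{n+1}$.

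For the corollary ($k=3$, $n=2$) your route is genuinely different. The paper, after reducing to $l=2$ as you do, argues \emph{topologically}: with $u,v$ spanning the eigenspace it defines $T(x)=-\arctan(u(x)/v(x)):\Om\to\R/\pi\mathbb{Z}$, observes that the level sets $T^{-1}(t)$ are exactly the nodal lines of $w_t=\cos(t)u+\sin(t)v$, shows $T$ is continuous, open, and surjective, lifts $T$ to $T':\Om\to\R$ by simple connectedness, and uses connectedness of each nodal line to conclude that the image of $T'$ is an open interval of length $\le\pi$, contradicting surjectivity of $T$. No boundary information and no integration by parts are needed.

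Your polar-decomposition argument, by contrast, hinges on the identity $\int_\Om \rho^2|\nabla\phi|^2=0$, obtained by testing $\nabla\cdot(\rho^2\nabla\phi)=0$ against $\phi$ and using $\rho\,\partial_\nu\phi=0$ on $\partial\Om$. There are two genuine difficulties here. First, the Robin condition on $\partial\Om$ is only variational in this setting (no regularity of $\partial\Om$ is assumed), so the pointwise splitting $\partial_\nu\rho=-\beta\rho$, $\rho\,\partial_\nu\phi=0$ is not available; one would instead have to take $\psi=v_2\phi$ and $\psi=v_1\phi$ as test functions in the weak equations for $v_1,v_2$, which requires $\phi\in H^1\cap L^\infty(\Om)$. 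Second, $\nabla\phi=\rho^{-2}(v_1\nabla v_2-v_2\nabla v_1)$ need not lie in $L^2(\Om)$, and $\phi$ need not be bounded, if $\rho$ vanishes on $\partial\Om$---which the main claim does \emph{not} exclude. Your proposed exhaustion by $\{\rho>\eps\}$ leaves a boundary term $\int_{\{\rho=\eps\}}\phi\,(v_1\nabla v_2-v_2\nabla v_1)\cdot\nu$ whose vanishing as $\eps\to 0$ is not established, and the remark that boundary zeros of $w$ ``must be isolated'' presupposes smoothness of $\partial\Om$ that the statement does not assume. The paper's topological argument sidesteps all of this: it uses only interior continuity of $u,v$, the Courant nodal-domain bound, and the covering $\R\to\R/\pi\mathbb{Z}$.
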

\begin{proof}
By contradiction, consider $x\in \cap_{i=k-l+1}^k u_i^{-1}(\left\{0\right\})$, and $\U_r=(u_1,\hdots,u_k)1_{\R^n\setminus \B_{x,r}}$. For a small enough $r$, $\U_r$ is admissible and, with the same estimate as in Lemma \ref{LemmaPerturb}, there is a ($L^2$-normalized) eigenfunction $u_\alpha$ associated to $\lambda_k(\Om;\beta)$ such that
\[\int_{\B_{x,r}}|\nabla u_\alpha|^2\mathrm{d}\Ln+\gamma|\B_{x,r}|\leq 2\beta \int_{\partial\B_{x,r}}u_\alpha^2\mathrm{d}\Hs.\]
This implies that for any small enough $r>0$,
\[ \fint_{\partial\B_{x,r}}u_\alpha^2\mathrm{d}\Hs\geq \frac{r\gamma}{2n\beta}.\]
However, if $x$ is at the intersection of every nodal line associated to eigenfunctions of $\lambda_k(\Om;\beta)$, and since these eigenfunctions are $\mathcal{C}^1$, there is a constant $C>0$ such that for all $\alpha$, $|u_\alpha(y)|\leq C|x-y|$, thus
\[ \fint_{\partial\B_{x,r}}u_\alpha^2\mathrm{d}\Hs\leq C^2r^2,\]
which is a contradiction.\bigbreak

Let us now suppose that $n=2$, $k=3$, and that $\Om$ is simply connected. Since any eigenfunction related to $\lambda_3(\Om;\beta)$ has a non-empty nodal set, we know that
\[\lambda_1(\Om;\beta)<\lambda_2(\Om;\beta)=\lambda_3(\Om;\beta).\]
Let $u_1,u,v$ be the associated eigenfunctions. Every non-trivial linear combination of $u$ and $v$ is an eigenfunction associated to $\lambda_2(\Om;\beta)$ so it has a non-empty nodal set and no more than two nodal domains, thus, with the simple connectedness of $\Om$, its nodal set is connected (either a circle or a curve) and the eigenfunction changes sign at the nodal set.\\
Let us parametrize  the eigenspace with
\[w_t(x)=\cos(t)u(x)+\sin(t)v(x).\]
We show that the nodal sets $(\left\{w_t=0\right\})_{t\in \frac{\R}{\pi\mathbb{Z}}}$ are a partition of $\Om$ and that there is a continuous open function $T:\Om\to\frac{\R}{\pi\mathbb{Z}}$ such that $x\in \left\{w_{T(x)}=0\right\}$ for all $x\in\Om$. Indeed, the sets $(\left\{w_t=0\right\})_{t\in \frac{\R}{\pi\mathbb{Z}}}$ are disjoints because $u$ and $v$ have no common zeroes, and for any $x$ we may define

\[T(x)=-\text{arctan}\left(\frac{u(x)}{v(x)}\right),\]

where $\text{arctan}(\infty)=\frac{\pi}{2}[\pi]$ by convention. The function $T$ is continuous, $x\in \left\{w_{T(x)}=0\right\}$, and since eigenfunctions change sign at their nodal lines then $T$ is open. Since $\Om$ is simply connected $T$ may be lifted into
\[\Om\underset{T'}{\longrightarrow}\mathbb{R}\underset{p}{\longrightarrow}\mathbb{R}/\pi\mathbb{Z}.\]
Let $I$ be the image of $T'$, since $T$ is open, then $T'$ is too so $I$ is an open interval. If $T(x)=T(y)$, then $x$ and $y$ are in the same nodal line and since these are connected we know $T'(x)=T'(y)$. In particular, if $t$ is in $I$, then $t\pm \pi\notin I$; this implies that $I=]a,b[$ where $a<b$ and $b-a\leq\pi$. However every $w_t$ has a non-empty nodal set so $\frac{\R}{\pi\mathbb{Z}}=T(\Om)=p(]a,b[)$: this is a contradiction, thus $\Om$ is not simply connected.

\end{proof}

\bibliographystyle{plain}
\bibliography{biblio}

\end{document}